\def\grayscale{}
\newif\ifusetikzexternalize
\declaretheorem{example}
\declaretheorem{lemma}
\newcommand{\+}[1]{\boldsymbol{\ensuremath{\mathbf{#1}}}}
\newcommand\mtext[1]{\text{#1}}
\def\volfrac{\alpha}
\newcommand\symmdiff{\triangle}
\newcommand\halfspace[1]{l\roundpar{#1}}
\newcommand\roundpar[1]{\left( #1 \right)}
\newcommand\squarepar[1]{\left[ #1 \right]}
\newcommand\curlypar[1]{\left\lbrace #1 \right\rbrace}
\newcommand\oneDIntegral[4]{\int_{#1}^{#2} #3 \hspace{1ex} d#4}
\newcommand\integral[3]{\int_{#1} #2 \hspace{1ex} d#3}
\newcommand\defeq{\mathrel{\mathop:}=}
\newcommand\half{\frac{1}{2}}
\newcommand\dt{\delta}
\DeclareFontFamily{U}{mathx}{\hyphenchar\font45}
\DeclareFontShape{U}{mathx}{m}{n}{
      <5> <6> <7> <8> <9> <10>
      <10.95> <12> <14.4> <17.28> <20.74> <24.88>
      mathx10
      }{}
\DeclareSymbolFont{mathx}{U}{mathx}{m}{n}
\DeclareMathAccent{\widecheck}    {0}{mathx}{"71}
\newcommand\approximate[1]{\widecheck{#1}}
\newcommand\preimage{P}
\newcommand\gradient{\nabla}
\newcommand\abs[1]{\left\lvert #1 \right\rvert}
\newcommand\norm[1]{\left\lVert #1 \right\rVert}
\newcommand\weber{\text{We}}
\newcommand\ratio[1]{\mathcal{R}_{#1}}
\DeclareMathOperator{\atan}{atan}
\newcommand\set[2]{
  \{\,#1 \mid #2\,\}
}
\newcommand\flowmap[2]{\Psi^{#1}}
\DeclareMathOperator*{\argmin}{arg\,min}
  \pgfplotsset{compat=newest}
  \pgfplotsset{plot coordinates/math parser=false}
  \pgfplotsset{
      legend image with text/.style={
          legend image code/.code={%
              \node[anchor=center] at (0.3cm,0cm) {#1};
          }
      },
  }
  \pgfplotsset{
      compat=1.3,
  }
  \tikzset{external/system call={pdflatex \tikzexternalcheckshellescape -halt-on-error
    -interaction=batchmode -jobname "\image" "\texsource" && 
    pdfseparate -f 1 -l 1 "\image".pdf "\image"_tmp.pdf &&
    mv "\image"_tmp.pdf "\image".pdf}}
  \newcommand\inputtikzorpdf[1]{
    \tikzsetnextfilename{#1\grayscale}
    \input{./tikz/#1\grayscale}
  }
  \newcommand\inputtikzorpdf[1]{\includegraphics{pdf/#1\grayscale}}
  \newcommand\tikzexternalenable{}
  \newcommand\tikzexternaldisable{}
\newcommand\import{inkscape/}{[\grayscale.pdf_tex}1]{\import{inkscape/}{#1\grayscale.pdf_tex}}
\def\onefigwidth{\textwidth}
\def\twofigwidth{0.475\textwidth}
\def\threefigwidth{0.3\textwidth}
\newtheorem{step}{Steps}
\crefname{step}{step}{steps}
\newlist{stepenum}{enumerate}{1} 
\setlist[stepenum]{label=(\alph*), ref=(\alph*)}
\begin{document}
  \begin{frontmatter}

  \author{
    Ronald A. Remmerswaal
  }
  \author{
    Arthur E.P. Veldman
  }

  \address{
    Bernoulli Institute, University of Groningen\\
    PO Box 407, 9700 AK Groningen, The Netherlands
  }

  \begin{abstract}
    For capillary driven flow the interface curvature is essential in the modelling of surface tension via the imposition of the Young--Laplace jump condition.
We show that traditional geometric volume of fluid (VOF) methods, that are based on a piecewise linear approximation of the interface, do not lead to an interface curvature which is convergent under mesh refinement in time-dependent problems.
Instead, we propose to use a piecewise parabolic approximation of the interface, resulting in a class of piecewise parabolic interface calculation (PPIC) methods.
In particular, we introduce the parabolic LVIRA and MOF methods, PLVIRA and PMOF, respectively.
We show that a Lagrangian remapping method is sufficiently accurate for the advection of such a parabolic interface.

It is numerically demonstrated that the newly proposed PPIC methods result in an increase of reconstruction accuracy by one order, convergence of the interface curvature in time-dependent advection problems and Weber number independent convergence of a droplet translation problem, where the advection method is coupled to a two-phase Navier--Stokes solver.
The PLVIRA method is applied to the simulation of a 2D rising bubble, which shows good agreement to a reference solution.
  \end{abstract}

  \title{Parabolic interface reconstruction for 2D volume of fluid methods}

  \begin{keyword}
    two-phase flow \sep volume of fluid method \sep parabolic reconstruction
  \end{keyword}
\end{frontmatter}

\section{Introduction}\label{sec:introduction}
The advection of the phase interface plays a central role in the simulation of immiscible, and in our case incompressible, two-phase flow.
Therefore, much research has been performed towards the development of accurate, efficient and robust interface advection methods.
Explicit modelling of the interface~\citep{Tryggvason2001} results in a highly accurate and continuous representation, for which the interface resolution is essentially unrelated to the resolution of the mesh.
Changes in topology can be taken into account~\citep{Bo2011}, but this is nontrivial.

The interface can also be represented implicitly, either using a level set or a volume fraction function, resulting in level set and volume of fluid (VOF) methods respectively.
Using level sets~\citep{Osher1988,Gibou2017} results in an efficient method for which changes in topology are automatically taken into account.
Level set methods however do not inherently conserve mass and require artificial redistancing to ensure that the level set function remains a distance function~\citep{Sussman1994}.
On the contrary, VOF methods (see for instance the works of~\citet{Hirt1981,Youngs1982,Rider1997,Puckett1997,Rudman1998,Harvie2001,Lopez2004,Weymouth2010}) can inherently conserve mass and result, in particular if geometric VOF methods are considered, in a sharper interface.
Geometric VOF methods rely on the geometric reconstruction of the interface which results in a sharp interface representation (see e.g.~\cref{fig:intro:flower_shape_MoF_sd_2}) thus reflecting the assumed immiscibility of the fluids.
On the other hand, algebraic VOF methods lack a geometric reconstruction of the interface, and result in a diffuse interface.
In this paper we will focus our attention on geometric VOF methods that are based on Lagrangian remapping; this is discussed in~\cref{sec:remap}.

Level set methods are sometimes preferred for their `smoother' interface representation, thus resulting in the ability to easily approximate the local interface geometry, such as the interface normal vector as well as the interface curvature.
In our experience however, a sufficiently accurate volume fraction field equally provides the ability to compute the interface normal vector as well as the interface curvature at high accuracy, the latter using local height-functions (LHFs).
What we exactly mean by `sufficiently accurate' is discussed in~\cref{sec:curvature}.

\begin{figure}
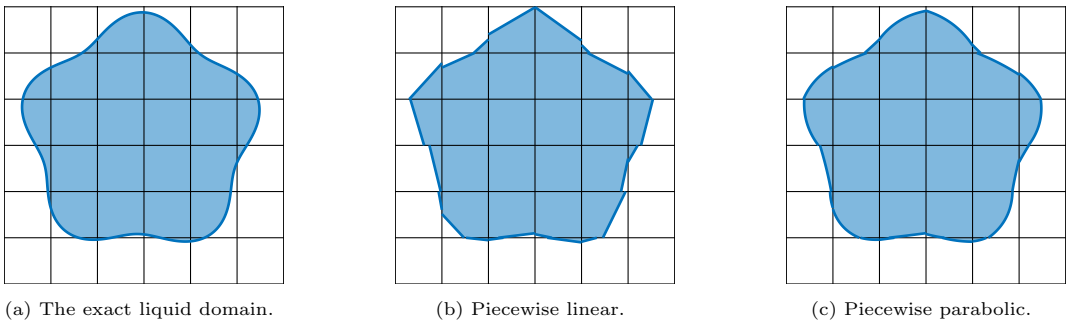

  \subcaptionbox{The exact liquid domain.
  \label{fig:intro:flower_shape_sd_1}}
  [\threefigwidth]{
    \def\tikzWidth{\textwidth*0.25}
    \def\tikzHeight{\textwidth*0.25}
    \inputtikzorpdf{intro_flower_shape_sd_1}
  }\hfill
  \subcaptionbox{Piecewise linear.
  \label{fig:intro:flower_shape_MoF_sd_2}}
  [\threefigwidth]{
    \def\tikzWidth{\textwidth*0.25}
    \def\tikzHeight{\textwidth*0.25}
    \inputtikzorpdf{intro_flower_shape_MoF_sd_2}
  }\hfill
  \subcaptionbox{Piecewise parabolic.
  \label{fig:intro:flower_shape_PMoF_sd_2}}
  [\threefigwidth]{
    \def\tikzWidth{\textwidth*0.25}
    \def\tikzHeight{\textwidth*0.25}
    \inputtikzorpdf{intro_flower_shape_PMoF_sd_2}
  }
  \caption{Example of a liquid domain and two different approximations (using the MOF and PMOF method respectively).
  Here the exact interface shape was defined as the zero level set of~\cref{eqn:results:recon:levelset}.}
  \label{fig:intro:flower_shape}
\end{figure}
For capillary driven flow the interface curvature is essential in the modelling of surface tension via the imposition of the Young--Laplace jump condition.
When traditional geometric reconstruction methods, that are based on a piecewise linear approximation of the interface (see~\cref{fig:intro:flower_shape_MoF_sd_2} and~\cref{sec:plic}), are used, we find that the curvature from LHFs does not converge under mesh refinement for time-dependent problems.
Nonconvergence of the interface curvature results in spurious and unphysical currents, as discussed in~\citep{Magnini2016}.
A theoretical understanding is provided in~\cref{sec:remap,sec:curvature} where we analyse the accuracy of the advection method and of the LHF-based curvature respectively.
These results are confirmed numerically in~\cref{sec:results:reverse,sec:results:translation}.
Based on this observed lack of convergence, we propose to use a piecewise parabolic approximation of the interface instead, as shown in~\cref{fig:intro:flower_shape_PMoF_sd_2} and discussed in~\cref{sec:parabolic}.
This essentially includes the curvature in the interface advection, and in~\cref{sec:results} we will demonstrate that this leads to convergence of the curvature, also for time-dependent problems.

Throughout this paper we will denote the positive time-step by $\delta$, and the maximum diameter of all control volumes by $h$.
Furthermore, a numerical approximation to some quantity $y$ is denoted by $\approximate{y}(\delta, h)$ where we never explicitly indicate the dependence on $\delta, h$.
The statement `$y$ converges' is understood as $\approximate{y}(\delta, h)$ converges to $y$ under mesh refinement
\begin{equation}
  \lim_{h \rightarrow 0} \approximate{y}(\delta, h) = y,
\end{equation}
where the time-step $\delta$ satisfies the CFL time-step restriction
\begin{equation}\label{eqn:intro:cfl}
  \delta \le \frac{h}{U},
\end{equation}
such that $h\rightarrow 0$ implies $\delta \rightarrow 0$.

\section{Advection methods based on Lagrangian remapping}\label{sec:remap}
In this section we discuss the advection method that will be used to approximately track the interface $I(t)$, and show that this advection method is indeed sufficiently accurate for the advection of a parabolic interface approximation, as we propose in~\cref{sec:parabolic}.

We denote the computational domain by $\Omega \subset \mathbb{R}^2$, the interface between the two fluids by $I(t)$ and the corresponding liquid domain by $\Omega^l(t) \subset \Omega$.

The zeroth and first moment of some set $A \subset \mathbb{R}^2$ will be denoted by
\begin{equation}
  M_0(A) \defeq \integral{A}{}{V}, \quad \+M_1(A) \defeq \integral{A}{\+x}{V},
\end{equation}
respectively (the first moment is introduced for use in the MOF reconstruction method discussed in~\cref{sec:plic:mof}).
Given the zeroth and first moment, the centroid is given by
\begin{equation}
  \+C(A) \defeq \frac{\+M_1(A)}{M_0(A)}.
\end{equation}
The liquid volume $M_{0,c}^l$ and its control volume fraction $\volfrac_c \in [0, 1]$ are then defined as
\begin{equation}
  M_{0,c}^l \defeq M_0(c \cap \Omega^l), \quad \volfrac_c \defeq \frac{M_{0,c}^l}{M_0(c)},
\end{equation}
for some control volume $c \subset \Omega$.
The liquid first moment and centroid are similarly denoted by $\+M^l_{1,c}$ and $\+C^l_{c}$ respectively.

\subsection{Lagrangian remapping}
\begin{figure}
  \subcaptionbox{The dashed curves correspond to the boundary of the preimage of $c$, which is denoted by $\Psi^{-\delta} c$.
  The solid curve corresponds to the fluid interface $I^{(n)}$ and the shaded region corresponds to $\Psi^{-\delta} c \cap \Omega^{l,(n)}$ (cf. the right-hand side of~\cref{eqn:remap:remap_m0}).
  \label{fig:remap:remapping_oldtime}}
  [\twofigwidth]{
\begingroup%
  \makeatletter%
  \providecommand\color[2][]{%
    \errmessage{(Inkscape) Color is used for the text in Inkscape, but the package 'color.sty' is not loaded}%
    \renewcommand\color[2][]{}%
  }%
  \providecommand\transparent[1]{%
    \errmessage{(Inkscape) Transparency is used (non-zero) for the text in Inkscape, but the package 'transparent.sty' is not loaded}%
    \renewcommand\transparent[1]{}%
  }%
  \providecommand\rotatebox[2]{#2}%
  \newcommand*\fsize{\dimexpr\f@size pt\relax}%
  \newcommand*\lineheight[1]{\fontsize{\fsize}{#1\fsize}\selectfont}%
  \ifx\svgwidth\undefined%
    \setlength{\unitlength}{143.35456348bp}%
    \ifx\svgscale\undefined%
      \relax%
    \else%
      \setlength{\unitlength}{\unitlength * \real{\svgscale}}%
    \fi%
  \else%
    \setlength{\unitlength}{\svgwidth}%
  \fi%
  \global\let\svgwidth\undefined%
  \global\let\svgscale\undefined%
  \makeatother%
  \begin{picture}(1,0.82245924)%
    \lineheight{1}%
    \setlength\tabcolsep{0pt}%
    \put(0,0){\includegraphics[width=\unitlength,page=1]{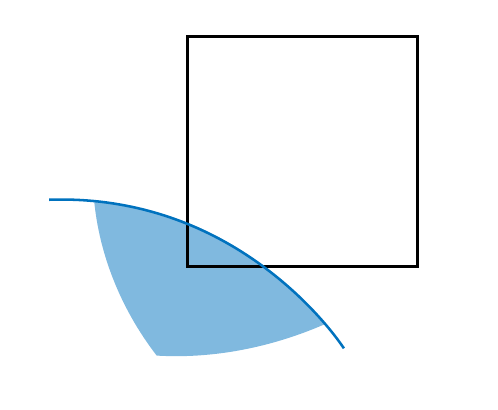}}%
    \put(0.43375673,0.52457867){\color[rgb]{0.11372549,0.11372549,0.11372549}\makebox(0,0)[lt]{\lineheight{1.25}\smash{\begin{tabular}[t]{l}$\flowmap{-\dt}{}c$\end{tabular}}}}%
    \put(0,0){\includegraphics[width=\unitlength,page=2]{remapping.pdf}}%
    \put(0.69879181,0.06868602){\color[rgb]{0,0.44705882,0.74509804}\makebox(0,0)[lt]{\lineheight{1.25}\smash{\begin{tabular}[t]{l}$I^{(n)}$\end{tabular}}}}%
    \put(0,0){\includegraphics[width=\unitlength,page=3]{remapping.pdf}}%
    \put(0.00427254,0.02290812){\color[rgb]{0.11372549,0.11372549,0.11372549}\makebox(0,0)[lt]{\lineheight{1.25}\smash{\begin{tabular}[t]{l}$\flowmap{-\dt}{t^{(n+1)}} c \cap \Omega^{l,(n)}$\end{tabular}}}}%
  \end{picture}%
\endgroup%

  }\hfill
  \subcaptionbox{The square corresponds to the boundary of the control volume $c$.
  The solid curve corresponds to the fluid interface $I^{(n+1)}$ and the shaded region corresponds to $c \cap \Omega^{l,(n+1)}$ (cf. the left-hand side of~\cref{eqn:remap:remap_m0}).
  \label{fig:remap:remapping_newtime}}
  [\twofigwidth]{
\begingroup%
  \makeatletter%
  \providecommand\color[2][]{%
    \errmessage{(Inkscape) Color is used for the text in Inkscape, but the package 'color.sty' is not loaded}%
    \renewcommand\color[2][]{}%
  }%
  \providecommand\transparent[1]{%
    \errmessage{(Inkscape) Transparency is used (non-zero) for the text in Inkscape, but the package 'transparent.sty' is not loaded}%
    \renewcommand\transparent[1]{}%
  }%
  \providecommand\rotatebox[2]{#2}%
  \newcommand*\fsize{\dimexpr\f@size pt\relax}%
  \newcommand*\lineheight[1]{\fontsize{\fsize}{#1\fsize}\selectfont}%
  \ifx\svgwidth\undefined%
    \setlength{\unitlength}{143.35456348bp}%
    \ifx\svgscale\undefined%
      \relax%
    \else%
      \setlength{\unitlength}{\unitlength * \real{\svgscale}}%
    \fi%
  \else%
    \setlength{\unitlength}{\svgwidth}%
  \fi%
  \global\let\svgwidth\undefined%
  \global\let\svgscale\undefined%
  \makeatother%
  \begin{picture}(1,0.82245924)%
    \lineheight{1}%
    \setlength\tabcolsep{0pt}%
    \put(0,0){\includegraphics[width=\unitlength,page=1]{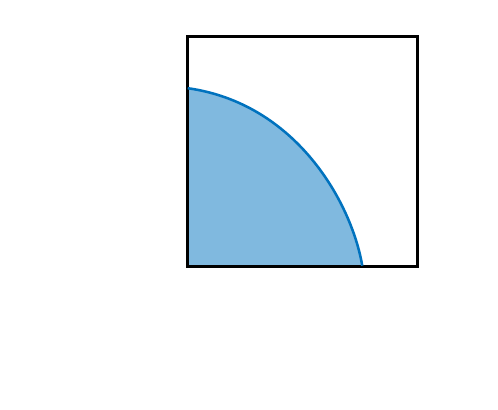}}%
    \put(0.72241448,0.20981343){\color[rgb]{0,0.44705882,0.74509804}\makebox(0,0)[lt]{\lineheight{1.25}\smash{\begin{tabular}[t]{l}$I^{(n+1)}$\end{tabular}}}}%
    \put(0,0){\includegraphics[width=\unitlength,page=2]{remapping_newtime.pdf}}%
    \put(0.107225,0.19429706){\color[rgb]{0.11372549,0.11372549,0.11372549}\makebox(0,0)[lt]{\lineheight{1.25}\smash{\begin{tabular}[t]{l}$c \cap \Omega^{l,(n+1)}$\end{tabular}}}}%
    \put(0,0){\includegraphics[width=\unitlength,page=3]{remapping_newtime.pdf}}%
  \end{picture}%
\endgroup%

  }
  \caption{Illustration of a Lagrangian remapping step, see also~\cref{eqn:remap:remap,eqn:remap:remap_m0}.}
  \label{fig:remap:remapping}
\end{figure}
We let $\Psi^{\delta}\+x_0$ denote the flow map of the velocity field $\+u$ over the time interval $[t^{(n)}, t^{(n+1)}]$, that is, $\Psi^{\delta}$ solves the initial value problem
\begin{equation}
  \frac{d}{dt}\+x = \+u(t, \+x(t)), \quad \+x(t^{(n)}) = \+x_0,
\end{equation}
resulting in $\Psi^{\delta}\+x_0 = \+x(t^{(n+1)})$.
The action of the flow map naturally extends to the application on sets of positions, such as $\Psi^{\delta} c$.
The preimage of the control volume $c$ under the flow map, which we denote by $\Psi^{-\delta} c$, is the set of points which end up inside $c$ under the flow map $\Psi^{\delta}$, and will henceforth be referred to as `the preimage of $c$'.
See also~\cref{fig:remap:remapping}.

Note that since the interface $I(t)$ is advected with the velocity field $\+u$, it follows that
\begin{equation}
  I^{(n+1)} = \Psi^\delta I^{(n)} \quad\Rightarrow\quad \Omega^{l,(n+1)} = \Psi^\delta \Omega^{l,(n)}.
\end{equation}
By the invertibility of $\Psi^{\delta}$ it holds that $\Psi^{\delta}(A \cap B) = \Psi^{\delta}A \cap \Psi^{\delta}B$ and therefore
\begin{equation}\label{eqn:remap:remap}
  c \cap \Omega^{l,(n+1)} = \Psi^{\delta}(\Psi^{-\delta}c \cap \Omega^{l,(n)}).
\end{equation}
Computing the zeroth moment of~\cref{eqn:remap:remap}, and by making use of the fact that the velocity field is divergence free\footnote{Throughout this article it will be assumed that the velocity field is divergence free, this is however not an inherent limitation of our proposed method.} (which implies that the flow map is area preserving, i.e. $M_0(\Psi^\delta A) = M_0(A)$) results in
\begin{equation}\label{eqn:remap:remap_m0}
  M_{0,c}^{l,(n+1)} = M_0\roundpar{\Psi^{-\delta} c \cap \Omega^{l,(n)}}.
\end{equation}
Equation~\eqref{eqn:remap:remap_m0} is key to understanding how a Lagrangian remapping-based geometric VOF method can be constructed, since such a method approximates each of the terms on the right-hand side of~\cref{eqn:remap:remap_m0}:
\begin{itemize}
  \item The liquid domain $\Omega^{l,(n)}$ is approximated per control volume.
  This will be discussed in~\cref{sec:plic,sec:parabolic} for the piecewise linear and piecewise parabolic approximation of the interface respectively.
  \item The preimage $\Psi^{-\delta} c$ is approximated by a polygon whose vertices are approximated using numerical integration, leading to two approximation errors.
  This is what we will discuss next.
\end{itemize}

\subsection{Approximate Lagrangian remapping}\label{sec:remap:method}
We denote the exact preimage of the control volume $c$ by
\begin{equation}
  \preimage_c \defeq \Psi^{-\delta} c.
\end{equation}
The approximate preimage $\approximate{\preimage_c}$ is constructed by making two approximations.
First we approximate the preimage by a polygonal representation denoted by $\preimage^\mtext{Rep}_c$, which is defined by connecting the corners of the exact preimage $\preimage_c$ by straight line segments, as is shown in~\cref{fig:remap:remapping_errors}.
\begin{figure}
  \subcaptionbox{The area of the hatched pattern corresponds to the reconstruction error $E_0^\mtext{Rec}$ ~\cref{eqn:remap:error:rec}.
  \label{fig:remap:remapping_errors_rec}}
  [\threefigwidth]{
\begingroup%
  \makeatletter%
  \providecommand\color[2][]{%
    \errmessage{(Inkscape) Color is used for the text in Inkscape, but the package 'color.sty' is not loaded}%
    \renewcommand\color[2][]{}%
  }%
  \providecommand\transparent[1]{%
    \errmessage{(Inkscape) Transparency is used (non-zero) for the text in Inkscape, but the package 'transparent.sty' is not loaded}%
    \renewcommand\transparent[1]{}%
  }%
  \providecommand\rotatebox[2]{#2}%
  \newcommand*\fsize{\dimexpr\f@size pt\relax}%
  \newcommand*\lineheight[1]{\fontsize{\fsize}{#1\fsize}\selectfont}%
  \ifx\svgwidth\undefined%
    \setlength{\unitlength}{102.41859015bp}%
    \ifx\svgscale\undefined%
      \relax%
    \else%
      \setlength{\unitlength}{\unitlength * \real{\svgscale}}%
    \fi%
  \else%
    \setlength{\unitlength}{\svgwidth}%
  \fi%
  \global\let\svgwidth\undefined%
  \global\let\svgscale\undefined%
  \makeatother%
  \begin{picture}(1,0.88204267)%
    \lineheight{1}%
    \setlength\tabcolsep{0pt}%
    \put(0,0){\includegraphics[width=\unitlength,page=1]{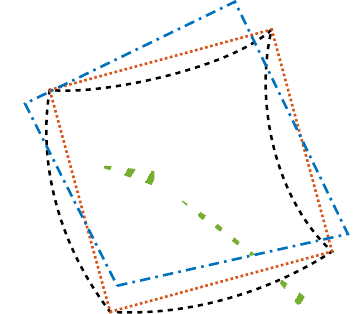}}%
    \put(0.04445317,0.16697963){\color[rgb]{0,0,0}\makebox(0,0)[lt]{\lineheight{1.25}\smash{\begin{tabular}[t]{l}$\preimage_c$\end{tabular}}}}%
    \put(0.71083494,0.8345912){\color[rgb]{0.85098039,0.3254902,0.09803922}\makebox(0,0)[lt]{\lineheight{1.25}\smash{\begin{tabular}[t]{l}$\preimage^\mtext{Rep}_c$\end{tabular}}}}%
    \put(0.1978671,0.73265725){\color[rgb]{0,0.44705882,0.74117647}\makebox(0,0)[lt]{\lineheight{1.25}\smash{\begin{tabular}[t]{l}$\approximate{\preimage_c}$\end{tabular}}}}%
    \put(0,0){\includegraphics[width=\unitlength,page=2]{remapping_errors_rec.pdf}}%
    \put(0.23152441,0.46258608){\color[rgb]{0.46666667,0.67843137,0.18823529}\makebox(0,0)[lt]{\lineheight{1.25}\smash{\begin{tabular}[t]{l}$\approximate{I}^{(n)}$\end{tabular}}}}%
    \put(0.62123628,0.36846523){\color[rgb]{0,0,0}\makebox(0,0)[lt]{\lineheight{1.25}\smash{\begin{tabular}[t]{l}$I^{(n)}$\end{tabular}}}}%
    \put(0,0){\includegraphics[width=\unitlength,page=3]{remapping_errors_rec.pdf}}%
  \end{picture}%
\endgroup%

  }\hfill
  \subcaptionbox{The area of the hatched pattern corresponds to the representation error $E_0^\mtext{Rep}$ ~\cref{eqn:remap:error:rep}.
  \label{fig:remap:remapping_errors_rep}}
  [\threefigwidth]{
\begingroup%
  \makeatletter%
  \providecommand\color[2][]{%
    \errmessage{(Inkscape) Color is used for the text in Inkscape, but the package 'color.sty' is not loaded}%
    \renewcommand\color[2][]{}%
  }%
  \providecommand\transparent[1]{%
    \errmessage{(Inkscape) Transparency is used (non-zero) for the text in Inkscape, but the package 'transparent.sty' is not loaded}%
    \renewcommand\transparent[1]{}%
  }%
  \providecommand\rotatebox[2]{#2}%
  \newcommand*\fsize{\dimexpr\f@size pt\relax}%
  \newcommand*\lineheight[1]{\fontsize{\fsize}{#1\fsize}\selectfont}%
  \ifx\svgwidth\undefined%
    \setlength{\unitlength}{102.41859015bp}%
    \ifx\svgscale\undefined%
      \relax%
    \else%
      \setlength{\unitlength}{\unitlength * \real{\svgscale}}%
    \fi%
  \else%
    \setlength{\unitlength}{\svgwidth}%
  \fi%
  \global\let\svgwidth\undefined%
  \global\let\svgscale\undefined%
  \makeatother%
  \begin{picture}(1,0.88204267)%
    \lineheight{1}%
    \setlength\tabcolsep{0pt}%
    \put(0,0){\includegraphics[width=\unitlength,page=1]{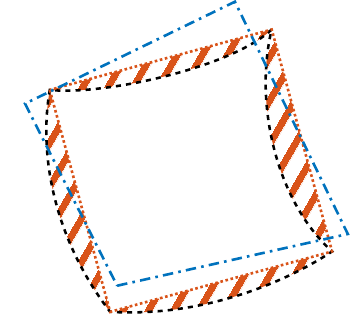}}%
    \put(0.04445317,0.16697963){\color[rgb]{0,0,0}\makebox(0,0)[lt]{\lineheight{1.25}\smash{\begin{tabular}[t]{l}$\preimage_c$\end{tabular}}}}%
    \put(0.71083494,0.8345912){\color[rgb]{0.85098039,0.3254902,0.09803922}\makebox(0,0)[lt]{\lineheight{1.25}\smash{\begin{tabular}[t]{l}$\preimage^\mtext{Rep}_c$\end{tabular}}}}%
    \put(0.1978671,0.73265725){\color[rgb]{0,0.44705882,0.74117647}\makebox(0,0)[lt]{\lineheight{1.25}\smash{\begin{tabular}[t]{l}$\approximate{\preimage_c}$\end{tabular}}}}%
    \put(0,0){\includegraphics[width=\unitlength,page=2]{remapping_errors_rep.pdf}}%
    \put(0.23152441,0.46258608){\color[rgb]{0.46666667,0.67843137,0.18823529}\makebox(0,0)[lt]{\lineheight{1.25}\smash{\begin{tabular}[t]{l}$\approximate{I}^{(n)}$\end{tabular}}}}%
    \put(0.62123628,0.36846523){\color[rgb]{0,0,0}\makebox(0,0)[lt]{\lineheight{1.25}\smash{\begin{tabular}[t]{l}$I^{(n)}$\end{tabular}}}}%
    \put(0,0){\includegraphics[width=\unitlength,page=3]{remapping_errors_rep.pdf}}%
  \end{picture}%
\endgroup%

  }\hfill
  \subcaptionbox{The area of the hatched pattern corresponds to the integration error $E_0^\mtext{Int}$ ~\cref{eqn:remap:error:int}.
  \label{fig:remap:remapping_errors_int}}
  [\threefigwidth]{
\begingroup%
  \makeatletter%
  \providecommand\color[2][]{%
    \errmessage{(Inkscape) Color is used for the text in Inkscape, but the package 'color.sty' is not loaded}%
    \renewcommand\color[2][]{}%
  }%
  \providecommand\transparent[1]{%
    \errmessage{(Inkscape) Transparency is used (non-zero) for the text in Inkscape, but the package 'transparent.sty' is not loaded}%
    \renewcommand\transparent[1]{}%
  }%
  \providecommand\rotatebox[2]{#2}%
  \newcommand*\fsize{\dimexpr\f@size pt\relax}%
  \newcommand*\lineheight[1]{\fontsize{\fsize}{#1\fsize}\selectfont}%
  \ifx\svgwidth\undefined%
    \setlength{\unitlength}{102.41859015bp}%
    \ifx\svgscale\undefined%
      \relax%
    \else%
      \setlength{\unitlength}{\unitlength * \real{\svgscale}}%
    \fi%
  \else%
    \setlength{\unitlength}{\svgwidth}%
  \fi%
  \global\let\svgwidth\undefined%
  \global\let\svgscale\undefined%
  \makeatother%
  \begin{picture}(1,0.88204267)%
    \lineheight{1}%
    \setlength\tabcolsep{0pt}%
    \put(0,0){\includegraphics[width=\unitlength,page=1]{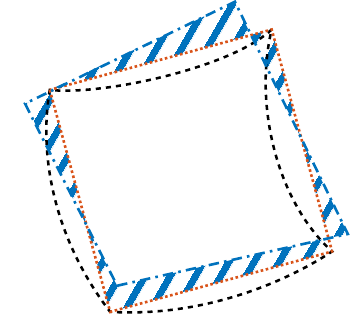}}%
    \put(0.04445317,0.16697963){\color[rgb]{0,0,0}\makebox(0,0)[lt]{\lineheight{1.25}\smash{\begin{tabular}[t]{l}$\preimage_c$\end{tabular}}}}%
    \put(0.71083494,0.8345912){\color[rgb]{0.85098039,0.3254902,0.09803922}\makebox(0,0)[lt]{\lineheight{1.25}\smash{\begin{tabular}[t]{l}$\preimage^\mtext{Rep}_c$\end{tabular}}}}%
    \put(0.1978671,0.73265725){\color[rgb]{0,0.44705882,0.74117647}\makebox(0,0)[lt]{\lineheight{1.25}\smash{\begin{tabular}[t]{l}$\approximate{\preimage_c}$\end{tabular}}}}%
    \put(0,0){\includegraphics[width=\unitlength,page=2]{remapping_errors_int.pdf}}%
    \put(0.23152441,0.46258608){\color[rgb]{0.46666667,0.67843137,0.18823529}\makebox(0,0)[lt]{\lineheight{1.25}\smash{\begin{tabular}[t]{l}$\approximate{I}^{(n)}$\end{tabular}}}}%
    \put(0.62123628,0.36846523){\color[rgb]{0,0,0}\makebox(0,0)[lt]{\lineheight{1.25}\smash{\begin{tabular}[t]{l}$I^{(n)}$\end{tabular}}}}%
    \put(0,0){\includegraphics[width=\unitlength,page=3]{remapping_errors_int.pdf}}%
  \end{picture}%
\endgroup%

  }
  \caption{Illustration of the three error contributions.
  The dashed curvilinear polygon represents the boundary of the preimage of $c$, whereas its polygonal approximation $\preimage^\mtext{Rep}_c$ and fully approximated polygon $\approximate{\preimage_c}$ are denoted by the dotted and dash-dotted lines respectively.
  Furthermore, the interface $I^{(n)}$ at $t = t^{(n)}$ corresponds to the solid curved line, whereas the approximate piecewise linear interface $\approximate{I}^{(n)}$ is represented by the solid lines.}
  \label{fig:remap:remapping_errors}
\end{figure}
Secondly, we approximate the flow map using a numerical integration method.
This means we approximately integrate along pathlines, where the velocity field $\+u(t, \+x)$ is now linearly interpolated (in space and time) from a staggered velocity field.
For the time integration we use the second-order accurate Heun method, which, as we will see, is sufficiently accurate for this purpose.
This results in the following approximate liquid volume
\begin{equation}\label{eqn:remap:approx:zerothmoment}
  \approximate{M_{0,c}^l} = M_0\roundpar{\approximate{P_c} \cap \approximate{\Omega_c^{l}}}.
\end{equation}

The method described here is very similar to the Lagrangian-Eulerian advection scheme (LEAS) presented in~\citet{Zinjala2015}.

\subsection{Error analysis}\label{sec:remap:error}
We will now estimate the error in the volume fractions, due to the two approximations made in the preimage as well as the approximation of the liquid domain.
To this end we denote the symmetric difference of two sets $A, B \subset \mathbb{R}^2$ by
\begin{equation}\label{eqn:remap:symmdiff}
  A \symmdiff B \defeq (A \cup B) \setminus (A \cap B). 
\end{equation}
Using the symmetric difference we define the reconstruction error as
\begin{equation}\label{eqn:remap:error:rec}
  E_0^\mtext{Rec} \defeq M_0\roundpar{\approximate{\Omega_c^{l}} \symmdiff \Omega_c^{l}}.
\end{equation}
Here $\Omega_c^{l}$ denotes the liquid neighbourhood centred around the control volume $c$
\begin{equation}
  \Omega_c^{l} = {\Omega^{l}} \cap \bigcup_{c' \in \mathcal{C}(c)} c',
\end{equation}
where $\mathcal{C}(c)$ is the set of control volumes which share at least one vertex with $c$ (i.e. a $3\times3$ neighbourhood of control volumes if the mesh is rectilinear).
The CFL time step restriction given by~\cref{eqn:intro:cfl} guarantees that the preimage will not overlap with any control volumes other than the ones that share a vertex with $c$, thereby permitting the use of the set $\mathcal{C}(c)$.
Moreover we define the polygonal representation error
\begin{equation}\label{eqn:remap:error:rep}
  E_0^\mtext{Rep} \defeq M_0\roundpar{\preimage^\mtext{Rep}_c \symmdiff \preimage_c},
\end{equation}
and the integration error
\begin{equation}\label{eqn:remap:error:int}
  E_0^\mtext{Int} \defeq M_0\roundpar{\approximate{\preimage_c} \symmdiff \preimage^\mtext{Rep}_c}.
\end{equation}
The three errors are illustrated in~\cref{fig:remap:remapping_errors}.

The following lemma, which is based on~\citet[eqn. 3.15]{Zhang2013}, shows how the error resulting from a Lagrangian remapping method can be bounded by the previously introduced errors.
\begin{lemma}[Error decomposition]\label{lem:error_decomposition}
  The approximation error of a Lagrangian remapping method can be bounded by
\begin{equation}
  \abs{\approximate{M_{0,c}^l} - M_{0,c}^l} \le E_0^\mtext{Rec} + E_0^\mtext{Rep} + E_0^\mtext{Int}.
\end{equation}

\end{lemma}
Furthermore, \cref{lem:error_remapping} provides estimates for the approximation errors of the preimage.
Proofs are found in~\cref{sec:remap_accuracy}.
\begin{lemma}[Remapping error estimates]\label{lem:error_remapping}
  The representation and integration errors are given by
\begin{equation}
  E_0^\mtext{Rep} = \mathcal{O}(h \dt (h + \dt)^2), \quad E_0^\mtext{Int} = \mathcal{O}(h \dt (h + \dt)^2 + h\dt^{q+1}),
\end{equation}
where $q$ is the order of accuracy of the time integration method.

\end{lemma}

Combining both lemmas allows us to prove the following consistency result for the Lagrangian remapping method described in~\cref{sec:remap:method}.
\begin{restatable}[Consistency of a Lagrangian remapping method]{theorem}{theoremfractionbound}\label{thm:intro:fraction_error_bound}
  The following single time step consistency result holds if $\delta \propto h$ (as is the case under the CFL time step restriction given by~\cref{eqn:intro:cfl})
  \begin{equation}
    \norm{\approximate{\volfrac} - \volfrac}_{L^\infty} = \underbrace{\mathcal{O}(h^{r-2})}_\text{reconstruction} + \underbrace{\mathcal{O}(h^2)}_\text{representation} + \underbrace{\mathcal{O}(h^2 + h^q)}_\text{integration}.
  \end{equation}
  where $r$ denotes the local order of accuracy of the liquid domain approximation (e.g. $r = 3$ for the MOF method~\citep{Dyadechko2005})
  \begin{equation}
    E_0^\mtext{Rec} = \mathcal{O}(h^{r}),
  \end{equation}
  and $q$ denotes the order of accuracy of the time integration method (e.g. $q = 2$ for Heun's method).
\end{restatable}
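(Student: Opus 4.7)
The plan is to aggregate the two preceding lemmas via the triangle inequality and then rescale from absolute liquid volumes to volume fractions, using the fact that each control volume has area $M_0(c) = \Theta(h^2)$ in 2D.

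First, I would write the identity
\[
\bigl|\approximate{\volfrac_c} - \volfrac_c\bigr| \;=\; \frac{\bigl|\approximate{M_{0,c}^l} - M_{0,c}^l\bigr|}{M_0(c)},
\]
reducing the task to bounding the zeroth-moment error on a single control volume and then dividing by $\Theta(h^2)$.

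Next, I would apply \cref{lem:error_decomposition} to split the moment error into the three named contributions,
\[
\bigl|\approximate{M_{0,c}^l} - M_{0,c}^l\bigr| \;\lesssim\; E_0^{\mtext{Rec}} + E_0^{\mtext{Rep}} + E_0^{\mtext{Int}},
\]
and bound each: $E_0^{\mtext{Rec}} = \mathcal{O}(h^r)$ by hypothesis on the local order of the liquid-domain approximation, while $E_0^{\mtext{Rep}}$ and $E_0^{\mtext{Int}}$ come from \cref{lem:error_remapping}. That lemma is naturally phrased in terms of both $\delta$ and $h$, so invoking the CFL restriction~\cref{eqn:intro:cfl} to set $\delta = \Theta(h)$ collapses those mixed bounds into pure powers of $h$, namely $\mathcal{O}(h^4)$ for the representation error and $\mathcal{O}(h^4 + h^{q+2})$ for the integration error. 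Dividing the combined bound by $M_0(c) = \Theta(h^2)$ then yields precisely the three terms $\mathcal{O}(h^{r-2}) + \mathcal{O}(h^2) + \mathcal{O}(h^2 + h^q)$ claimed in the theorem. Passing to the supremum over $c$ to obtain the $L^\infty$ norm is harmless because the constants in both lemmas depend only on the regularity of the velocity field, of the interface, and of the local mesh geometry, none of which depend on the specific cell.

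The main obstacle is really the bookkeeping rather than anything analytical: the substantive work is buried inside \cref{lem:error_decomposition,lem:error_remapping}. The step that demands the most care is the conversion of the $\delta$-dependent bounds of the Remapping Error Estimates lemma into $h$-bounds via $\delta \propto h$, since a mixed term such as $h\,\delta^3 + \delta^4$ only collapses to $\mathcal{O}(h^4)$ once the CFL scaling is invoked, and one has to verify that the implicit constants remain uniform in $c$ as $h\to 0$. Once that scaling is in place, the theorem reduces to a triangle-inequality aggregation followed by division by the $\Theta(h^2)$ cell area.
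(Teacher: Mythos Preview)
Your proposal is correct and follows essentially the same route as the paper: apply \cref{lem:error_decomposition} to bound $\lvert\approximate{M_{0,c}^l}-M_{0,c}^l\rvert$ by $E_0^{\mtext{Rec}}+E_0^{\mtext{Rep}}+E_0^{\mtext{Int}}$, substitute the hypothesis $E_0^{\mtext{Rec}}=\mathcal{O}(h^r)$ together with the estimates of \cref{lem:error_remapping} under $\delta\propto h$, divide by $M_0(c)=\Theta(h^2)$, and take the maximum over cells. The only cosmetic difference is that the paper writes the division by $h^2$ and the $\max$ in a single line rather than first bounding per cell and then passing to the supremum.
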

\begin{proof}
  The $L^\infty$-norm of some scalar function $\alpha$, which is defined on the grid, is defined as its maximal value in absolute sense
  \begin{equation}
    \norm{\alpha}_{L^\infty} \defeq \max_{c \in \mathcal{C}} \abs{\alpha_c},
  \end{equation}
  and thus
  \begin{equation}
    \norm{\approximate{\volfrac} - \volfrac}_{L^\infty} = \max_{c \in \mathcal{C}} \frac{\abs{\approximate{M_{0,c}^l} - M_{0,c}^l}}{M_0(c)} \le \frac{E_0^\mtext{Rec} + E_0^\mtext{Rep} + E_0^\mtext{Int}}{h^2}.
  \end{equation}
  which follows from~\cref{lem:error_decomposition}.
  Then, using the result of~\cref{lem:error_remapping} and the assumed local order of accuracy of the liquid domain approximation, we find that (assuming $\delta \propto h$)
  \begin{equation}
    \norm{\approximate{\volfrac} - \volfrac}_{L^\infty} \le \frac{\mathcal{O}(h^{r}) + \mathcal{O}(h^4) + \mathcal{O}(h^4 + h^{q+2})}{h^2} = \mathcal{O}(h^{r-2}) + \mathcal{O}(h^2) + \mathcal{O}(h^2 + h^q).
  \end{equation}
\end{proof}
The result of~\cref{thm:intro:fraction_error_bound} is limited to a single time-step.
When the Lagrangian remapping method is not coupled to a Navier--Stokes solver, then the only possible source of error propagation is via the interface reconstruction.
Hence for generalising~\cref{thm:intro:fraction_error_bound} to a multiple time-step convergence result, as is done in~\citet{Zhang2016}, the stability of the interface reconstruction method w.r.t. perturbations in the reference moments (i.e. errors from the previous time-step) must be analysed.
Thus far we are not aware of any results which prove the accuracy of interface reconstruction methods that are based on cost functions, as we consider here.
For example, in~\citep{Dyadechko2005} it is claimed that the MOF method is third-order accurate, but a proof is not provided.

From~\cref{thm:intro:fraction_error_bound} we find that the accuracy of the volume fractions is limited by the reconstruction accuracy whenever a piecewise linear approximation of the interface is used, for which $r = 3$ (we numerically demonstrate this in~\cref{sec:results:reconstruction} and it is claimed in~\citep{Dyadechko2005} for the MOF method).
An increase of the reconstruction accuracy to $r = 4$, would yield an improvement of the $L^\infty$-error of the volume fractions from first- to second-order accuracy, while using the same definition of the approximate preimage.
This is achieved in~\cref{sec:parabolic} where we consider the piecewise parabolic reconstruction of the interface.
\section{Curvature convergence}\label{sec:curvature}
We now turn to the observed lack of curvature convergence in time-dependent problems, as discussed in~\cref{sec:introduction}.
For simplicity in presentation we assume (only in this section) a uniform rectilinear mesh, and denote each control volume by an index $(i,j)$.
The control volume centroid is now denoted by $\+x_{i,j} = \begin{bmatrix}x_i & y_j \end{bmatrix}^T$.

\subsection{Curvature computation using a local height-function}
\begin{figure}
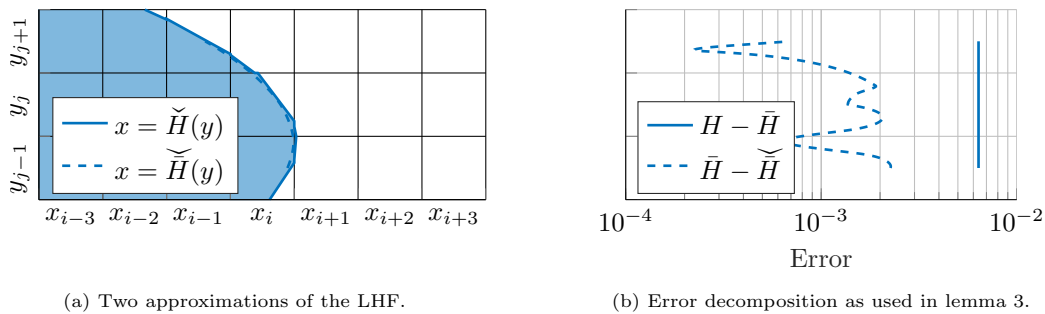

  \setbox9=\hbox{
    \def\tikzWidth{\textwidth*0.35} 
    \def\tikzHeight{\textwidth*0.1714}
    \inputtikzorpdf{curvature_lhf_error}
  }
  \subcaptionbox{Two approximations of the LHF.\label{fig:curvature:lhf:example}}
  [0.5\textwidth]{
    \centering
    \raisebox{\dimexpr\ht9-\height}{
    \def\tikzWidth{\textwidth*0.4}
    \def\tikzHeight{\textwidth*0.1714}
    \inputtikzorpdf{curvature_lhf_example}}
  }\hfill
  \subcaptionbox{Error decomposition as used in~\cref{lem:curvature:error_analysis}.\label{fig:curvature:lhf:error}}
  [0.45\textwidth]{
    \def\tikzWidth{\textwidth*0.35} 
    \def\tikzHeight{\textwidth*0.1714}
    \inputtikzorpdf{curvature_lhf_error} 
  }
  \caption{Example of a part of the fluid interface represented by a LHF for which principal normal direction coincides with the positive $x$-direction.
  The exact interface is a parabola, and therefore the approximation error $H - \bar{H}$, as shown in the right-hand side figure, is constant (solid line).}
  \label{fig:curvature:lhf} 
\end{figure}
The curvature $\kappa$ is computed for each interface control volume using a local height-function (LHF) resulting from a principal normal direction which is aligned with one of the co-ordinate axes\footnote{For unstructured meshes LHFs can also be used, see~\citet{Owkes2015}.}, consider for example~\cref{fig:curvature:lhf:example} where the principal normal direction coincides with the positive $x$-direction.
Given such a LHF, which we denote by $H(y)$, the curvature can be computed as follows
\begin{equation}
  \kappa(y) = H^{[2]}(y)\squarepar{1 + \roundpar{H^{[1]}(y)}^2}^{-3/2},
\end{equation}
where $H^{[1]}, H^{[2]}$ denote the first and second derivative of $H$, respectively.

\def\nrhf{N_H}
Rather than using the LHF directly, the curvature is instead computed using the averaged LHF, which is defined as
\begin{equation}
  \bar{H}(y) \defeq \frac{1}{h}\oneDIntegral{y-h/2}{y+h/2}{H(\tau)}{\tau},
\end{equation}
which by the midpoint rule is a second-order accurate approximation of $H$~\citep{quarteroni2010}.
The reason for using the averaged LHF rather than the LHF itself, is that for rectilinear meshes the former can easily be computed by summing over adjacent volume fractions
\begin{equation}\label{eqn:curvature:average_lhf_sum}
  \bar{H}_j \defeq \bar{H}(y_j) = \frac{1}{h}\sum_{k=-\nrhf}^{\nrhf} h^2 \volfrac_{i+k,j} - (\nrhf + \half)h,
\end{equation}
where we have omitted the dependence on the index $i$ in order to simplify the notation. 
See also~\cref{fig:curvature:lhf:example}.
The parameter $\nrhf$ should be sufficiently large to be able to determine the validity of the averaged LHF: the bottom and top control volumes must be entirely full and empty respectively ($\volfrac_{i-\nrhf,j} = 1, \volfrac_{i+\nrhf,j} = 0$) and the volume fractions must be monotonically decreasing ($\volfrac_{i-k+1,j} \le \volfrac_{i-k,j}$ for $k = -\nrhf, \ldots, \nrhf-1$).
If the principal normal direction coincides with the main direction of the interface normal, then for a sufficiently well-resolved interface these conditions are satisfied for $\nrhf = 3$, as is the case in~\cref{fig:curvature:lhf:example} and used by~\citet{Afkhami2007}.

The averaged LHF can be approximated by summing over adjacent and approximate volume fractions
\begin{equation}
  \approximate{\bar{H}}_j \defeq \frac{1}{h}\sum_{k=-\nrhf}^{\nrhf} h^2 \approximate{\volfrac}_{i+k,j} - (\nrhf + \half)h.
\end{equation}
Provided that the volume fractions are $p$-th order accurate, this results in a $(p+1)$-st order accurate approximation to $\bar{H}(y)$.
Recall that $p = \min(r-2, 2, q)$ according to~\cref{thm:intro:fraction_error_bound}, where $r$ is the order of accuracy of the liquid domain approximation and $q = 2$ is the order of accuracy of the time integration method.
Hence for traditional piecewise linear interface reconstruction methods, for which $r = 3$, we find that the averaged LHF is approximated at second-order accuracy.

The fully approximated curvature is then given by
\begin{equation}\label{eqn:curvature:definition}
  \approximate{\kappa}_{j} \defeq \approximate{\bar{H}}^{\approximate{[2]}}(y)\squarepar{1 + \roundpar{\approximate{\bar{H}}^{\approximate{[1]}}(y)}^2}^{-3/2},
\end{equation}
where the second-order accurate finite difference approximations to the first and second derivatives of some function $f$ are defined as
\begin{equation}
  f^{\approximate{[1]}}_j \defeq \frac{{f}_{j+1} - {f}_{j-1}}{2h}, \quad f^{\approximate{[2]}}_j \defeq \frac{{f}_{j+1} - 2 {f}_{j} + {f}_{j-1}}{h^2}.
\end{equation}
Hence three consecutive values of the LHF are needed for the LHF based approximation of the interface curvature.
Whenever this is not possible we combine LHFs from different principal normal directions, as proposed in~\citep{Popinet2009}, resulting in the generalised height-function (GHF) method.

\subsection{Error analysis}
The first and second derivatives, which are used to define the approximate curvature in~\cref{eqn:curvature:definition}, are obtained from the approximate differentiation of approximate LHFs.
Hence the error resulting from this approximation consists of an error resulting from the second-order finite difference approximation of the derivative, as well as the approximation of the LHF.
The following lemma provides an estimate of those approximation errors for the first and second derivative of the LHF, and in particular shows that the second derivative converges under mesh refinement only if the volume fractions, which are used to obtain the approximate LHFs, are at least second-order accurate (so $p \ge 2$).
\begin{lemma}[Derivative error estimates]\label{lem:curvature:error_analysis}
  Assuming that $H \in C^\infty([y_j-3h/2,y_j+3h/2])$ and that $p$-th order accurate volume fractions are used to approximate the averaged LHFs, we find
  \begin{equation}\label{eqn:curvature:error_expression}
    \approximate{\bar{H}}^{\approximate{[\lambda]}}_j = H^{[\lambda]}_j + \underbrace{\mathcal{O}(h^2)}_\text{finite difference approx.} + \underbrace{\mathcal{O}(h^{p+1-\lambda})}_\text{volume fraction approx.}, \quad \lambda = 1, 2.
  \end{equation}
\end{lemma}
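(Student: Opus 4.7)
The plan is to isolate two independent sources of error and bound each by standard arguments, then combine them. Write
\begin{equation*}
  \approximate{\bar{H}}^{\approximate{[\lambda]}}_j - H^{[\lambda]}_j
  = \bigl(\approximate{\bar{H}}^{\approximate{[\lambda]}}_j - \bar{H}^{\approximate{[\lambda]}}_j\bigr)
  + \bigl(\bar{H}^{\approximate{[\lambda]}}_j - H^{[\lambda]}_j\bigr),
\end{equation*}
where the first term isolates the effect of the inaccurate volume fractions and the second term isolates the effect of applying a finite-difference operator to the (exact) averaged LHF.

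For the second term I would invoke two standard facts about $C^\infty$ functions on the stencil interval. First, the midpoint-rule identity defining $\bar{H}$, expanded in Taylor series, gives $\bar{H}(y) = H(y) + \tfrac{h^2}{24} H^{[2]}(y) + \mathcal{O}(h^4)$, which differentiates to $\bar{H}^{[\lambda]}(y) = H^{[\lambda]}(y) + \mathcal{O}(h^2)$ for $\lambda = 1,2$. Second, the central differences $f^{\approximate{[1]}}_j$ and $f^{\approximate{[2]}}_j$ are both second-order accurate approximations of the respective exact derivatives at $y_j$ whenever $f$ is smooth. Applying this to $f = \bar{H}$ and combining with the first fact yields $\bar{H}^{\approximate{[\lambda]}}_j = H^{[\lambda]}_j + \mathcal{O}(h^2)$, which contributes the finite-difference term in~\cref{eqn:curvature:error_expression}.

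For the first term I would use the definition~\cref{eqn:curvature:average_lhf_sum} to write
\begin{equation*}
  \approximate{\bar{H}}_k - \bar{H}_k = h \sum_{m=-\nrhf}^{\nrhf} (\approximate{\volfrac}_{i+m,k} - \volfrac_{i+m,k}),
\end{equation*}
and invoke the assumption that the volume fractions are $p$-th order accurate, giving $\approximate{\bar{H}}_k - \bar{H}_k = \mathcal{O}(h^{p+1})$ because $\nrhf$ is a fixed stencil parameter independent of $h$. Substituting this pointwise bound into the linear finite-difference operators, which carry a factor $h^{-\lambda}$ in the denominator, yields a contribution of order $\mathcal{O}(h^{p+1-\lambda})$, matching the volume-fraction term in~\cref{eqn:curvature:error_expression}. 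Adding the two contributions completes the proof.

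The only subtlety worth being careful about is the cancellation used in amplifying the volume-fraction error: one might worry that the errors $\approximate{\bar{H}}_k - \bar{H}_k$ for $k = j-1, j, j+1$ cancel through the finite-difference stencil, giving a better bound. They do not cancel in general, since the volume-fraction errors need not be smooth functions of the index $k$, so the worst-case bound $\mathcal{O}(h^{p+1-\lambda})$ is tight. This is precisely why a piecewise linear reconstruction ($p=1$) fails to deliver a convergent curvature ($\lambda=2$), motivating the need for the higher-order reconstruction developed in~\cref{sec:parabolic}.
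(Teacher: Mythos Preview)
Your proposal is correct and follows essentially the same approach as the paper: the same two-term decomposition, the same use of the midpoint-rule expansion plus second-order finite differences for the $\bar{H}$ term, and the same $\mathcal{O}(h^{p+1})$ bound on $\approximate{\bar{H}}_k-\bar{H}_k$ followed by division by $h^\lambda$ for the volume-fraction term. Your closing remark about the lack of cancellation is exactly the paper's point that the error $\approximate{\bar{H}}-\bar{H}$ is not differentiable in $y$, so the finite-difference stencil genuinely amplifies it.
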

\begin{proof}
  Let the approximation error of the averaged LHF be defined as
  \begin{equation}\label{eqn:curvature:error_analysis:erbardef}
    \bar{e}(y; h) \defeq \bar{H}(y) - H(y),
  \end{equation}
  where $\bar{e} \in C^\infty([y_j-3h/2,y_j+3h/2])$ by the assumed smoothness of $H$.
  By the midpoint rule we find that $\bar{e}(y; h) = \mathcal{O}(h^2)$.
  It follows that
  \begin{equation}\label{eqn:curvature:error_analysis:ebar}
    \bar{H}^{\approximate{[\lambda]}}_j = H^{\approximate{[\lambda]}}_j + \bar{e}^{\approximate{[\lambda]}}(y_j; h) = \squarepar{H^{{[\lambda]}}_j + \mathcal{O}(h^2)} + \squarepar{\bar{e}^{{[\lambda]}}(y; h) + \mathcal{O}(h^4)} = H^{{[\lambda]}}_j + \mathcal{O}(h^2), \quad \lambda = 1,2,
  \end{equation}
  by applying $(\dots)^{\approximate{[\lambda]}}$ to~\cref{eqn:curvature:error_analysis:erbardef} and evaluating the result at $y = y_j$, and moreover using the differentiability of $\bar{e}$.

  Similarly, we let the approximation error due to the approximation of the volume fractions be defined as
  \begin{equation}
    \approximate{\bar{e}}(y; h) \defeq \approximate{\bar{H}}(y) - \bar{H}(y),
  \end{equation}
  where we have defined
  \begin{equation}
    \approximate{\bar{H}}(y) \defeq \frac{1}{h}\oneDIntegral{y-h/2}{y+h/2}{\approximate{H}(\tau)}{\tau}.
  \end{equation}
  Here $\approximate{H}$ is the approximate LHF based on the piecewise approximation of the interface, see also~\cref{fig:curvature:lhf}.
  By the assumed $p$-th order accuracy of the volume fractions, we find that $\approximate{\bar{e}}(y; h) = \mathcal{O}(h^{p+1})$.
  Contrary to $\bar{e}$, we can not show differentiability of $\approximate{\bar{e}}$ w.r.t. $y$ due to the discontinuity of $\approximate{H}$ at $y = \frac{y_{j-1}+y_{j}}{2}$ and $y = \frac{y_{j}+y_{j+1}}{2}$
   (see also~\cref{fig:curvature:lhf:error} where the error is clearly non differentiable at $y = y_j$).
  This implies that approximation errors in the volume fractions will be amplified due to the division by $2h$ and $h^2$ in the approximation of the first and second derivative respectively
  \begin{equation}\label{eqn:curvature:error_analysis:etildebar}
    \approximate{\bar{H}}^{\approximate{[\lambda]}}_j = \bar{H}^{\approximate{[\lambda]}}_j + \approximate{\bar{e}}^{\approximate{[\lambda]}}(y_j; h) = \bar{H}^{\approximate{[\lambda]}}_j + \mathcal{O}(h^{p+1-\lambda}), \quad \lambda = 1, 2.
  \end{equation}
  Combining~\cref{eqn:curvature:error_analysis:ebar,eqn:curvature:error_analysis:etildebar} yields the desired result.
\end{proof}
We note that the result of~\cref{lem:curvature:error_analysis} is more general than the analysis done in e.g.~\citet{Bornia2011}, where it is assumed that the volume fractions are exact.
That is, letting $p = \infty$ in~\cref{eqn:curvature:error_expression} leads to the same second-order approximation error of the curvature as found in~\citet{Bornia2011}.

\Cref{lem:curvature:error_analysis} implies that when a piecewise linear approximation of the interface is used, for which $r = 3$, we find that the volume fractions will be first order accurate ($p = \min(r-2, 2, q) = 1$) and thus the approximated second derivative is inconsistent due to the insufficiently accurate volume fractions, resulting in a lack of curvature convergence.
If the volume fractions are insufficiently accurate (e.g. $p = 1$) then there will exist some optimal value of $h$, say $h^*$, for which the curvature is most accurate (see e.g.~\citet[eq. 4.20]{Zhang2017}), but of course this precludes convergence under mesh refinement: whenever $h < h^*$ the curvature error will no longer be reduced.
Convergence of the second derivative of the LHF requires a more accurate liquid domain approximation for which $r \ge 4$ (which results in $p = \min(r-2, 2, q) = 2$), in turn this would then also lead to a convergent curvature, as we numerically demonstrate in~\cref{sec:results:reverse,sec:results:translation}.

Obtaining a more accurate liquid domain approximation for which $r = 4$ will be discussed in~\cref{sec:parabolic} where we introduce piecewise parabolic approximations of the interface as generalisations of the piecewise linear approximations of the interface that are discussed in~\cref{sec:plic}.

\section{Optimisation-based PLIC methods}\label{sec:plic}
We will present the piecewise linear interface calculation (PLIC) methods, that are based on the optimisation of some cost function $f$ over some search space $Q$ (e.g. the space of linear interfaces $Q_1$), in a rather abstract way that straightforwardly allows us to generalise any such PLIC method to a parabolic interface reconstruction method.
This generalisation does not alter the cost function $f$, but rather replaces the search space $Q$ with a larger search space which includes parabolic interfaces, which is the topic of~\cref{sec:parabolic}.

We consider two PLIC methods in the context of {optimisation-based reconstruction methods}.
That is, the reconstruction step for each control volume $c$ can be written as an optimisation problem
\begin{equation}\label{eqn:overview:optim}
  q^* = \argmin_{q \in Q_1} f(q),
\end{equation}
where $Q_1$ is the search space consisting of linear interfaces and $f: Q_1 \rightarrow \mathbb{R}$ is a cost function.
More precisely, the search space $Q_1$ is defined as
\begin{equation}\label{eqn:plic:space_p1}
  Q_1 \defeq \set{q(\+x) = \+\eta\cdot (\+x - \+x_c) - \phi(\+\eta; M_{0,c}^l)}{\+\eta \in S^{1}},
\end{equation}
where $S^{1}$ is the unit 1-sphere and $\+x_c$ denotes the centroid of the control volume $c$.
The shift $\phi(\+\eta; M_{0,c}^l)$ is uniquely defined by requiring that the reconstructed liquid volume equals $M_{0,c}^l$, and therefore the only remaining unknown is the interface normal $\+\eta$.

The resulting interface inside the control volume $c$ is defined as the zero level set of $q^* \in Q_1$.
Similarly, the subset of $\mathbb{R}^2$ for which $q \le 0$ (i.e. defining a half-space if $q$ is linear) is denoted by
\begin{equation}
  \halfspace{q} \defeq \set{\+x \in \mathbb{R}^2}{q(\+x) \le 0},
\end{equation}
and is defined such that $c \cap \halfspace{q^*_c}$ defines the subset of $c$ that contains the liquid phase.
Given the optimal level set $q^*_c$ for each control volume $c$, we define the approximate liquid domain as
\begin{equation}
  \approximate{\Omega^l} \defeq \bigcup_{c \in \mathcal{C}} c \cap \halfspace{q^*_c},
\end{equation}
where $\mathcal{C}$ is the set of all control volumes.
See also~\cref{fig:intro:flower_shape_MoF_sd_2}.

Having defined the search space, all that remains is the definition of appropriate cost functions.
Note that such a cost function aims at approximating the actual error in terms of the area of the symmetric difference
\begin{equation}\label{eqn:plic:symmdiff_fun}
  f_{\symmdiff}(q) \defeq \frac{M_0((c \cap \halfspace{q}) \symmdiff (c \cap \Omega^l))}{M_0(c)}.
\end{equation}

\subsection{The least squares VOF interface reconstruction algorithm}
The least squares VOF interface reconstruction algorithm (LVIRA)~\citep{puckett1991volume} uses the following cost function
\begin{equation}\label{eqn:plic:lvira:costfun}
  f_{L^2}(q) \defeq \squarepar{\sum_{c' \in \mathcal{C}(c)} M_0(c') \roundpar{\frac{M^l_{0,c'} - M_0(c' \cap \halfspace{q})}{M_0(c')}}^2}^{1/2},
\end{equation}
where $\mathcal{C}(c)$ denotes the set of control volumes which share at least one vertex with $c$.
That is, the $L^2$-norm of the difference between the actual and reconstructed (by extending the interface to $c'$) volume fractions is considered in a neighbourhood around the control volume $c$.
See also~\cref{fig:plic:lvira:costfun}.
\begin{figure}
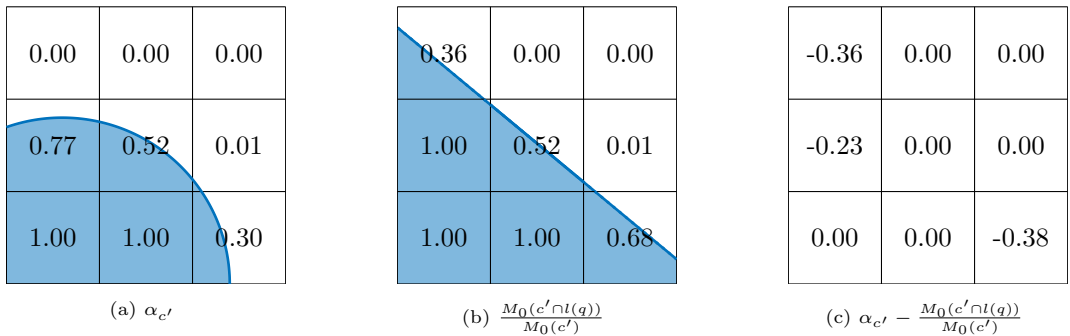

  \subcaptionbox{$\volfrac_{c'}$
  \label{fig:plic:lvira:costfun_exact}}
  [\threefigwidth]{
    \def\tikzWidth{\textwidth*0.25}
    \def\tikzHeight{\textwidth*0.25}
    \inputtikzorpdf{int_reconstruction_L2_costfun_example_exact}
  }\hfill
  \subcaptionbox{$\frac{M_0(c' \cap \halfspace{q})}{M_0(c')}$
  \label{fig:plic:lvira:costfun_reconstruction}}
  [\threefigwidth]{
    \def\tikzWidth{\textwidth*0.25}
    \def\tikzHeight{\textwidth*0.25} 
    \inputtikzorpdf{int_reconstruction_L2_costfun_example_reconstruction}
  }\hfill
  \subcaptionbox{$\volfrac_{c'} - \frac{M_0(c' \cap \halfspace{q})}{M_0(c')}$
  \label{fig:plic:lvira:costfun_difference}}
  [\threefigwidth]{
    \def\tikzWidth{\textwidth*0.25}
    \def\tikzHeight{\textwidth*0.25}
    \inputtikzorpdf{int_reconstruction_L2_costfun_example_difference}
  }
  \caption{Illustration of the cost function $f_{L^2}$ of the LVIRA method (cf.~\cref{eqn:plic:lvira:costfun}): (a) the exact volume fractions, (b) the reconstructed volume fractions (by extension of the interface to the neighbouring control volumes) and (c) their difference.
  Note that due to the computation of the shift $\phi(\+\eta; M_{0,c}^l)$ the central control volume $c$ never contributes to the cost function $f_{L^2}$.}
  \label{fig:plic:lvira:costfun}
\end{figure}
This means that if the volume fractions originate from a globally linear interface, then the reconstruction will be exact.

The cost function $f_{L^2}$ does not always have a unique global minimum\footnote{For example, if for a uniform mesh the $3 \times 3$ volume fraction field $\volfrac_{i,j}$ for $i,j \in \{-1,0,1\}$ has the symmetry $\volfrac_{i,j} = \volfrac_{\pm i,\pm j} \forall i,j$, then multiple optimal solutions exist to~\cref{eqn:overview:optim}.}, but for a resolved interface which can be expressed as a LHF (with any principal normal direction), we find that such problems do not arise.

An efficient LVIRA (ELVIRA) method was proposed~\citep{Pilliod2004} for rectilinear meshes, where six trial normals are considered, and the one for which $f_{L^2}$ is smallest is then selected.
This means that no numerical optimisation is required.
It should be noted however that the resulting normal may not be optimal.
Nevertheless, it can still be shown~\citep{Pilliod2004} that the reconstruction will be exact if the volume fractions originate from a globally linear interface.

\subsection{The moment of fluid reconstruction algorithm}\label{sec:plic:mof}
The moment of fluid (MOF)~\citep{Dyadechko2005} method proposes to include a reference first moment $\+M_{1,c}^{l,*}$ for each control volume $c$ to determine the interface normal.
Given such a reference first moment, the interface normal is determined by requiring the reconstructed first moment to be as close as possible to this reference first moment, this is achieved via optimisation of the following cost function
\begin{equation}\label{eqn:plic:mof:costfun}
  f_{\+M_1}(q) \defeq |\+M_{1,c}^{l,*} - \+M_1(c \cap \halfspace{q})|_2,
\end{equation}
where $\abs{\cdot}_2$ denotes the Euclidean norm.
Contrary to other PLIC methods, the MOF method uses only information (the zeroth and first moment) from the control volume itself, resulting in increased accuracy in particular when the interface is only nearly resolved ($|\kappa| h \approx 1$).
Of course, if the reference moments $M_{0,c}^l$ and $\+M_{1,c}^{l,*}$ originate from a linear interface, then there must exist a normal angle and thus $q^* \in Q_1$ for which $f_{\+M_{1}}(q^*) = 0$ and hence the MOF method reconstructs linear interfaces exactly.

For the MOF method it is known that not all reference moments result in stability w.r.t. perturbations of the reference first moment~\citep{Dyadechko2005}, just as there are reference moments which result in multiple solutions\footnote{For example, if the reference centroid coincides with the centroid of a square control volume $c$ while $\volfrac_c < 1$, then for any normal direction $\+\eta \in S^1$ the same value of the cost function $f_{\+M_1}$ is found for any other normal direction $\hat{\+\eta}$ provided that $\abs{\hat\eta_x} = \abs{\eta_x}$ and $\abs{\hat\eta_y} = \abs{\eta_y}$.} to the optimisation~\cref{eqn:overview:optim}.
The set of reference moments for which multiple solutions exist has measure zero, and therefore this does not pose a problem in practice~\citep{Dyadechko2005}.

In two spatial dimensions an efficient variant of the MOF method was proposed~\citep{Lemoine2017} for rectilinear meshes, which uses three trial normals.
The construction of trial normals is such that the trial normal for which $f_{\+M_1}$ is minimal is indeed the optimal one and therefore this efficient variant is merely a new solution strategy for the MOF method.

\subsection{Advection of the first moment}\label{sec:plic:centroid}
The MOF reconstruction method requires a reference first moment, and therefore the first moment must also be advected.
The advection equation of the first moment can be derived by computing the first moment of~\cref{eqn:remap:remap}
\begin{equation}
  \+M_{1,c}^{l,(n+1)} = \+M_1(\Psi^{\delta}(\Psi^{-\delta}c \cap \Omega^{l,(n)})).
\end{equation}
Hence after the preimage of the control volume $c$ is intersected with the liquid domain at $t = t^{(n)}$, this intersection must be advected forward in time before computing the first moment.
We choose to approximate this last step by instead advecting only the centroid of this intersection forward in time (as a point particle), resulting in the following approximation~\citep{Dyadechko2005}
\begin{equation}\label{eqn:plic:moment_advection:point_particle}
  \+C_{c}^{l,(n+1)} = \Psi^{\delta}\roundpar{\+C(\Psi^{-\delta}c \cap \Omega^{l,(n)})} + \mathcal{O}(\delta h^2).
\end{equation}

\subsubsection{Approximate advection of the first moment}
Numerically the advection of the first moment is done in three steps, as illustrated in~\cref{fig:centroid:remap_centroid_advection1,fig:centroid:remap_centroid_advection2,fig:centroid:remap_centroid_advection3}
\begin{figure}
  \subcaptionbox{The centroid of the intersection of the approximate preimage with the approximate liquid domain is computed.
  \label{fig:centroid:remap_centroid_advection1}}
  [\threefigwidth]{
\begingroup%
  \makeatletter%
  \providecommand\color[2][]{%
    \errmessage{(Inkscape) Color is used for the text in Inkscape, but the package 'color.sty' is not loaded}%
    \renewcommand\color[2][]{}%
  }%
  \providecommand\transparent[1]{%
    \errmessage{(Inkscape) Transparency is used (non-zero) for the text in Inkscape, but the package 'transparent.sty' is not loaded}%
    \renewcommand\transparent[1]{}%
  }%
  \providecommand\rotatebox[2]{#2}%
  \newcommand*\fsize{\dimexpr\f@size pt\relax}%
  \newcommand*\lineheight[1]{\fontsize{\fsize}{#1\fsize}\selectfont}%
  \ifx\svgwidth\undefined%
    \setlength{\unitlength}{119.43349763bp}%
    \ifx\svgscale\undefined%
      \relax%
    \else%
      \setlength{\unitlength}{\unitlength * \real{\svgscale}}%
    \fi%
  \else%
    \setlength{\unitlength}{\svgwidth}%
  \fi%
  \global\let\svgwidth\undefined%
  \global\let\svgscale\undefined%
  \makeatother%
  \begin{picture}(1,0.90553171)%
    \lineheight{1}%
    \setlength\tabcolsep{0pt}%
    \put(0,0){\includegraphics[width=\unitlength,page=1]{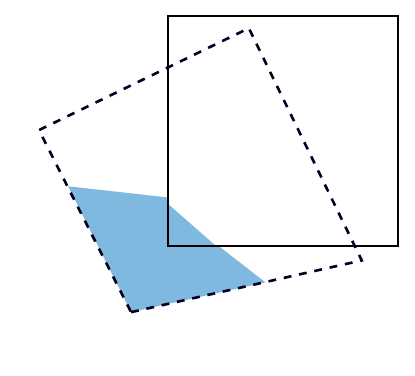}}%
    \put(0.7157611,0.65313317){\color[rgb]{0,0,0}\makebox(0,0)[lt]{\lineheight{1.25}\smash{\begin{tabular}[t]{l}$\approximate{\preimage_c}$\end{tabular}}}}%
    \put(0,0){\includegraphics[width=\unitlength,page=2]{remap_centroid_advection1.pdf}}%
    \put(0.24326032,0.48367702){\color[rgb]{0,0.44705882,0.74509804}\makebox(0,0)[lt]{\lineheight{1.25}\smash{\begin{tabular}[t]{l}$\approximate{I^{(n)}}$\end{tabular}}}}%
    \put(0,0){\includegraphics[width=\unitlength,page=3]{remap_centroid_advection1.pdf}}%
    \put(0.07376424,0.03852951){\color[rgb]{0.11372549,0.11372549,0.11372549}\makebox(0,0)[lt]{\lineheight{1.25}\smash{\begin{tabular}[t]{l}$\approximate{\preimage_c} \cap \approximate{\Omega^{l,(n)}}$\end{tabular}}}}%
    \put(0,0){\includegraphics[width=\unitlength,page=4]{remap_centroid_advection1.pdf}}%
    \put(0.30305585,0.19788502){\color[rgb]{0.11372549,0.11372549,0.11372549}\makebox(0,0)[lt]{\lineheight{1.25}\smash{\begin{tabular}[t]{l}$\approximate{\+C^{l,**}_c}$\end{tabular}}}}%
  \end{picture}%
\endgroup%

  }\hfill
  \subcaptionbox{The centroid is approximately advected as a point particle using~\cref{eqn:plic:moment_advection:point_particle}.
  \label{fig:centroid:remap_centroid_advection2}}
  [\threefigwidth]{
\begingroup%
  \makeatletter%
  \providecommand\color[2][]{%
    \errmessage{(Inkscape) Color is used for the text in Inkscape, but the package 'color.sty' is not loaded}%
    \renewcommand\color[2][]{}%
  }%
  \providecommand\transparent[1]{%
    \errmessage{(Inkscape) Transparency is used (non-zero) for the text in Inkscape, but the package 'transparent.sty' is not loaded}%
    \renewcommand\transparent[1]{}%
  }%
  \providecommand\rotatebox[2]{#2}%
  \newcommand*\fsize{\dimexpr\f@size pt\relax}%
  \newcommand*\lineheight[1]{\fontsize{\fsize}{#1\fsize}\selectfont}%
  \ifx\svgwidth\undefined%
    \setlength{\unitlength}{119.43349763bp}%
    \ifx\svgscale\undefined%
      \relax%
    \else%
      \setlength{\unitlength}{\unitlength * \real{\svgscale}}%
    \fi%
  \else%
    \setlength{\unitlength}{\svgwidth}%
  \fi%
  \global\let\svgwidth\undefined%
  \global\let\svgscale\undefined%
  \makeatother%
  \begin{picture}(1,0.90553171)%
    \lineheight{1}%
    \setlength\tabcolsep{0pt}%
    \put(0,0){\includegraphics[width=\unitlength,page=1]{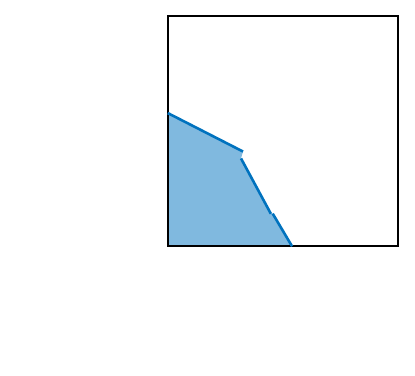}}%
    \put(0.48628688,0.37266592){\color[rgb]{0.11372549,0.11372549,0.11372549}\makebox(0,0)[lt]{\lineheight{1.25}\smash{\begin{tabular}[t]{l}$\approximate{\+C^{l,*}_c}$\end{tabular}}}}%
    \put(0,0){\includegraphics[width=\unitlength,page=2]{remap_centroid_advection2.pdf}}%
    \put(0.30585617,0.3053267){\color[rgb]{0.11372549,0.11372549,0.11372549}\rotatebox{50.617469}{\makebox(0,0)[lt]{\lineheight{1.25}\smash{\begin{tabular}[t]{l}$\dt \+u^{(n)}_c$\end{tabular}}}}}%
  \end{picture}%
\endgroup%

  }\hfill
  \subcaptionbox{The interface is reconstructed by optimising the MOF cost function $f_{\+M_1}$ (\cref{eqn:plic:mof:costfun}).
  \label{fig:centroid:remap_centroid_advection3}}
  [\threefigwidth]{
\begingroup%
  \makeatletter%
  \providecommand\color[2][]{%
    \errmessage{(Inkscape) Color is used for the text in Inkscape, but the package 'color.sty' is not loaded}%
    \renewcommand\color[2][]{}%
  }%
  \providecommand\transparent[1]{%
    \errmessage{(Inkscape) Transparency is used (non-zero) for the text in Inkscape, but the package 'transparent.sty' is not loaded}%
    \renewcommand\transparent[1]{}%
  }%
  \providecommand\rotatebox[2]{#2}%
  \newcommand*\fsize{\dimexpr\f@size pt\relax}%
  \newcommand*\lineheight[1]{\fontsize{\fsize}{#1\fsize}\selectfont}%
  \ifx\svgwidth\undefined%
    \setlength{\unitlength}{119.43349763bp}%
    \ifx\svgscale\undefined%
      \relax%
    \else%
      \setlength{\unitlength}{\unitlength * \real{\svgscale}}%
    \fi%
  \else%
    \setlength{\unitlength}{\svgwidth}%
  \fi%
  \global\let\svgwidth\undefined%
  \global\let\svgscale\undefined%
  \makeatother%
  \begin{picture}(1,0.90553171)%
    \lineheight{1}%
    \setlength\tabcolsep{0pt}%
    \put(0,0){\includegraphics[width=\unitlength,page=1]{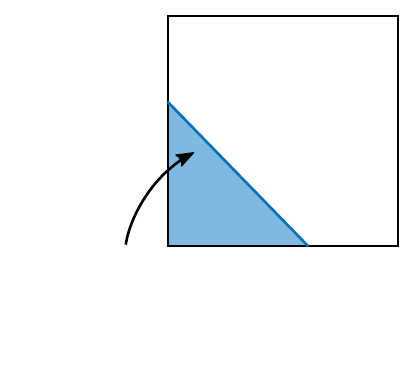}}%
    \put(0.19987922,0.18912055){\color[rgb]{0.11372549,0.11372549,0.11372549}\makebox(0,0)[lt]{\lineheight{1.25}\smash{\begin{tabular}[t]{l}$c \cap \approximate{\Omega^{l,(n+1)}}$\end{tabular}}}}%
    \put(0.51285482,0.58992869){\color[rgb]{0,0.44705882,0.74509804}\makebox(0,0)[lt]{\lineheight{1.25}\smash{\begin{tabular}[t]{l}$\approximate{I^{(n+1)}}$\end{tabular}}}}%
    \put(0,0){\includegraphics[width=\unitlength,page=2]{remap_centroid_advection3.pdf}}%
    \put(0.4553854,0.33674373){\color[rgb]{0.11372549,0.11372549,0.11372549}\makebox(0,0)[lt]{\lineheight{1.25}\smash{\begin{tabular}[t]{l}$\approximate{\+C^{l,(n+1)}_c}$\end{tabular}}}}%
  \end{picture}%
\endgroup%

  }
  \caption{Illustration of the three steps involved in advecting the liquid centroid (first moment) when a Lagrangian remapping method is used.}
  \label{fig:centroid:remap_centroid_advection}
\end{figure}

\begin{step}[Centroid advection]\leavevmode
  \begin{stepenum}
    \item The liquid first moment of the intersection of the approximate preimage with the approximate liquid domain is computed
    \begin{equation}
      \approximate{\+M_{1,c}^{l,**}} = \+M_1\roundpar{\approximate{\preimage_c} \cap \approximate{\Omega_c^{l,(n)}}}.
    \end{equation}
    \label{steps:plic:centroid:intersect}
    \item The centroid of this intersection is advected as a point particle using the linearly interpolated velocity field (in space and time)
    \begin{equation}
      \approximate{\+C_{c}^{l,*}} = \approximate{\+C_{c}^{l,**}} + \delta \+u^{(n)}_c \quad\Rightarrow\quad \approximate{\+M_{1,c}^{l,*}} = \approximate{\+M_{1,c}^{l,**}} + \approximate{M_{0,c}^{l,(n+1)}} \delta \+u^{(n)}_c,
    \end{equation}
    where for simplicity in presentation we consider here the forward Euler method for approximation of the time-integration in~\cref{eqn:plic:moment_advection:point_particle}.
    \label{steps:plic:centroid:advection}
    \item The interface is reconstructed by minimising the MOF cost function (\cref{eqn:plic:mof:costfun}) using $\approximate{\+M_{1,c}^{l,*}}$ as the reference first moment.
    The first moment corresponding to the reconstructed interface, which is defined as the zero level set of some $q^* \in Q$, is denoted by
    \begin{equation}
      \approximate{\+M_{1,c}^{l,(n+1)}} = \+M_1(c \cap \halfspace{q^*}).
    \end{equation}
    \label{steps:plic:centroid:reconstruction}
  \end{stepenum}
\end{step}
Note that for the advection of the zeroth moment, as given by~\cref{eqn:remap:approx:zerothmoment}, we considered only~\cref{steps:plic:centroid:intersect}, since~\cref{steps:plic:centroid:advection,steps:plic:centroid:reconstruction} do not alter the zeroth moment.

\subsubsection{Error analysis}
Each of the steps introduces several errors.
For the error analysis of~\cref{steps:plic:centroid:intersect} we note that for any $A \subseteq \mathbb{R}^2$
\begin{equation}
  \abs{\+M_1(\Omega_c \cap A) - M_0(\Omega_c \cap A)\+x_c}_2 = \abs{\integral{\Omega_c \cap A}{(\+x - \+x_c)}{V}}_2 = \mathcal{O}(h) M_0(\Omega_c \cap A).
\end{equation}
Hence if we define the first moments $\+M_{1,c}^{l,**}, \+M_{1,c}^{l,*}$ and $\+M_{1,c}^{l,(n+1)}$ relative to the centroid of the control volume $\+x_c$, then we can directly use the results of~\cref{sec:remap} to find the estimates
\begin{align}
  \abs{\+E_1^\mtext{Rec}}_2 &\defeq \abs{\+M_1\roundpar{\approximate{\Omega_c^{l}} \symmdiff \Omega_c^{l}}}_2 = \mathcal{O}(h^{r+1})\\
  \abs{\+E_1^\mtext{Rep}}_2 &\defeq \abs{\+M_1\roundpar{\preimage^\mtext{Rep}_c \symmdiff \preimage_c}}_2 = \mathcal{O}(h^2 \delta (h + \delta)^2)\\
  \abs{\+E_1^\mtext{Int}}_2 &\defeq \abs{\+M_1\roundpar{\approximate{\preimage_c} \symmdiff \preimage^\mtext{Rep}_c}}_2 = \mathcal{O}(h^2 \delta (h + \delta)^2 + h^2\delta^{q+1}).
\end{align}
It follows that if $\delta \propto h$
\begin{equation}
  \abs{\approximate{\+M_{1,c}^{l,**}} - \+M_{1,c}^{l,**}}_2 \le \abs{\+M_1\roundpar{\roundpar{\approximate{P_c} \cap \approximate{\Omega_c^{l}}} \symmdiff \roundpar{P_c \cap \Omega_c^{l}}}}_2 = \mathcal{O}(h^{r+1}) + \mathcal{O}(h^5) + \mathcal{O}(h^{q+3}).
\end{equation}
Note that the value of the first moment itself is $\mathcal{O}(h^3)$.

Step~\ref{steps:plic:centroid:advection} introduces the approximation error due to advecting the centroid as a point particle (\cref{eqn:plic:moment_advection:point_particle}) as well as a time integration error, where we again assume that a method of order $q$ is used.
In total this results in the following error estimate for the first two steps
\begin{equation}\label{eqn:plic:mof:steptwoaccuracy}
  \abs{\approximate{\+M_{1,c}^{l,*}} - \+M_{1,c}^{l,*}}_2 = \underbrace{\mathcal{O}(h^{r+1})}_\text{reconstruction} + \underbrace{\mathcal{O}(h^5 + h^{q+3})}_\text{preimage approximation} + \underbrace{\mathcal{O}(h^5)}_\text{\cref{eqn:plic:moment_advection:point_particle}} + \underbrace{\mathcal{O}(h^{q+3})}_\text{time integration}.
\end{equation}

Finally, in~\cref{steps:plic:centroid:reconstruction} the interface is reconstructed according to the MOF cost function (\cref{eqn:plic:mof:costfun}), resulting in some optimal $q^* \in Q$ for which the error made in the third step is minimised
\begin{equation}
  \abs{\approximate{\+M_{1,c}^{l,*}} - \approximate{\+M_{1,c}^{l,(n+1)}}}_2 = \abs{\approximate{\+M_{1,c}^{l,*}} - \+M_1(c \cap \halfspace{q^*})}_2 = f_{\+M_1}(q^*).
\end{equation}
In~\citet{Dyadechko2005} the following estimate is given for the approximation error made in the first moment when the MOF method is used
\begin{equation}\label{eqn:plic:mof:fmaccuracy}
  f_{\+M_1}(q^*) = \mathcal{O}(h^5), \quad q^* \in Q_1,
\end{equation}
which assumes that the interface that is being approximated is a $C^2$ curve with a bounded (from below) radius of curvature.
After a single time-step however, the interface has become piecewise smooth, and for the general non-smooth case the authors of~\citep{Dyadechko2005} provide the pessimistic estimate
\begin{equation}\label{eqn:plic:mof:fmaccuracy_nonsmooth}
  f_{\+M_1}(q^*) = \mathcal{O}(h^3),
\end{equation}
which, since the value of the first moment itself is also $\mathcal{O}(h^3)$, corresponds to a relative error of $\mathcal{O}(1)$.
It follows that, even for a single time-step, we can not prove consistency of the first-moment when the MOF cost function is used.

The numerical results shown in~\cref{fig:int_validation_advection_vortex_T1_LEAS_path_firstMoment} however suggest that
\begin{equation}
  f_{\+M_1}(q^*) = \mathcal{O}(h^4),
\end{equation}
when a PLIC reconstruction is used.
Hence the non-smooth estimate~\cref{eqn:plic:mof:fmaccuracy_nonsmooth} is indeed found to be too pessimistic.

\section{Parabolic reconstruction methods}\label{sec:parabolic}
The PLIC methods introduced in~\cref{sec:plic} result in a local order of accuracy of the liquid domain approximation of $r = 3$ (this is claimed in~\citep{Dyadechko2005} for the MOF method and observed numerically in~\cref{sec:results:reconstruction}).
Therefore, using the result of~\cref{thm:intro:fraction_error_bound} we find that the volume fractions are at most first order accurate, which using~\cref{lem:curvature:error_analysis} with $p = 1$ implies that the second derivative of the LHF, and therefore the curvature, does not converge.
To this end we now consider the generalisation of the reconstruction methods discussed in~\cref{sec:plic} to parabolic reconstruction, which will increase the local order of accuracy of the liquid domain approximation to $r = 4$.

\subsection{Parabolic search spaces}
We define the restricted parabolic search space $Q_2$ as
\begin{equation}\label{eqn:para:space_p2}
  Q_2 \defeq \set{q(\+x) = \+\eta\cdot(\+x - \+x_c) - \phi(\+\eta, \kappa; M_{0,c}^l) + \frac{\kappa}{2}(\+\tau\cdot(\+x - \+x_c))^2}{\+\eta \in S^1, \kappa \in \mathbb{R}},
\end{equation}
where $\+\tau \perp \+\eta$ is the interface tangent and $\kappa$ denotes the curvature.
The search space is restricted in the sense that the parabola can be defined relative to any point $\+x^* \in \mathbb{R}^2$, but we instead fix it around $\+x^* = \+x_c$, this reduces dimensionality of $Q_2$ by one\footnote{The `unrestricted' search space includes a tangential shift $\phi_\tau$ and is given by
\begin{equation}
  \set{q(\+x) = \+\eta\cdot(\+x - \+x_c) - \phi_\eta(\+\eta, \kappa, \phi_\tau; M_{0,c}^l) + \frac{\kappa}{2}(\+\tau\cdot(\+x - \+x_c) - \phi_\tau)^2}{\+\eta \in S^1, \phi_\tau\in \mathbb{R}, \kappa \in \mathbb{R}},
\end{equation}
which corresponds to $\+x^* = \+x_c + \phi_\tau \+\tau$.
}.
Furthermore we denote by $Q_2^\kappa$ the subspace of $Q_2$ where the curvature is known a priori (we let the curvature be approximated by the GHF method \citep{Popinet2009}), this reduces the dimensionality of the search space by one once more.

In~\citet{price2000piecewise} the full parabolic search space was considered: hence solving for the normal angle, curvature as well as the tangential shift $\phi_\tau$.
Here we however limit our discussion to the usage of the parabolic search spaces $Q_2^\kappa$ and $Q_2$.
At this point we have two cost functions, $f_{L^2}$ and $f_{\+M_1}$ (\cref{eqn:plic:lvira:costfun,eqn:plic:mof:costfun}), as discussed in~\cref{sec:plic}, as well as three search spaces: $Q_1, Q^\kappa_2$ and $Q_2$.
In~\cref{tab:para_recon:overview} we show the names of the methods for each of the possible combinations that can be formed.
\begin{table}
  \centering
  \begin{tabular}{rccc}
    \toprule
     & $Q_1$ (\cref{eqn:plic:space_p1}) & $Q^\kappa_2$ & $Q_2$ (\cref{eqn:para:space_p2})\\
     \hline
     $f_{L^2}$ (\cref{eqn:plic:lvira:costfun}) & (E)LVIRA~\citep{puckett1991volume,Pilliod2004} & \textbf{PLVIRA} & PROST~\citep{Renardy2002} \\
     $f_{\+M_1}$ (\cref{eqn:plic:mof:costfun}) & MOF~\citep{Dyadechko2005} & \textbf{PMOF} & N/A\\
    \bottomrule
  \end{tabular}
  \caption{Overview of optimisation-based reconstruction methods under consideration.
  The boldfaced PLVIRA and PMOF methods are the newly proposed methods.
  Here $Q_1$ is the space of linear functions, $Q_2$ denotes the restricted parabolic space and $Q_2^\kappa$ is a subspace of $Q_2$ where the curvature is known beforehand.}
  \label{tab:para_recon:overview}
\end{table}
Reconstruction methods based on $Q^\kappa_2$ or $Q_2$ will be referred to as piecewise parabolic interface calculation (PPIC) methods.

\subsection{PLVIRA and PROST}
The parabolic reconstruction of surface tension (PROST) method~\citep{Renardy2002} uses the $f_{L^2}$ cost function and the $Q_2$ search space in order to accurately compute the interface curvature for use in their numerical surface tension model.
They report the elimination of spurious currents thanks to a balanced surface tension formulation as well as the accurate curvature obtained from the parabolic reconstruction.

We are however quite satisfied with the curvature obtained from the GHF method~\citep{Popinet2009}, and therefore propose to use this curvature in combination with the $Q^\kappa_2$ search space.
This method is referred to as the parabolic LVIRA (PLVIRA) method.
\begin{figure}
  \captionbox{Three cost functions $f_{L^2}, f_{\+M_1}, f_\symmdiff$ for two different search spaces $Q_1, Q_2^\kappa$ as function of the normal angle $\vartheta = \atan(\eta_y/\eta_x)$.
  The used volume fractions and reference first moment correspond to the interface shown in~\cref{fig:int_reconstruction_L2_showRecon,fig:int_reconstruction_M1_showRecon}.
  The markers indicate the optimal angle.
  Note that comparison of the value between two different cost functions does not have any meaning since the cost functions can be arbitrarily scaled while yielding the same optimal angle.
  \label{fig:int_reconstruction_costFun}}
  [\onefigwidth]{
    \def\tikzWidth{\textwidth*0.6}
    \def\tikzHeight{\textwidth*0.375}
    \inputtikzorpdf{int_reconstruction_costFun}
    \begin{minipage}[b][5cm][t]{0.25\textwidth}
      \tikzexternaldisable
      \definecolor{mycolor1}{rgb}{0.00000,0.44700,0.74100}%
\definecolor{mycolor2}{rgb}{0.85000,0.32500,0.09800}%
\definecolor{mycolor3}{rgb}{0.46600,0.67400,0.18800}%

\begin{tikzpicture}
    \begin{axis}[
        hide axis,
        width=2.cm,
        height=2cm,
        ymin=0.99,
        ymax=1,
    ]
        \addplot [line width=1.0pt, color=mycolor1, domain=-0.1:0.1,samples=2, draw=none] {1};
            \label{plot:recon_costfun:line1}
        \addplot [dashed, line width=1.0pt, color=mycolor1, domain=-0.1:0.1,samples=2, draw=none] {1};
            \label{plot:recon_costfun:line2}
        \addplot [line width=1.0pt, color=mycolor2, domain=-0.1:0.1,samples=2, draw=none] {1};
            \label{plot:recon_costfun:line3}
        \addplot [dashed, line width=1.0pt, color=mycolor2, domain=-0.1:0.1,samples=2, draw=none] {1};
            \label{plot:recon_costfun:line4}
        \addplot [line width=1.0pt, color=mycolor3, domain=-0.1:0.1,samples=2, draw=none] {1};
            \label{plot:recon_costfun:line5}
        \addplot [dashed, line width=1.0pt, color=mycolor3, domain=-0.1:0.1,samples=2, draw=none] {1};
            \label{plot:recon_costfun:line6}

        %
        \coordinate (legend) at (axis description cs:0.5,0.5);
    \end{axis}

    %
    \draw (0,0) node{
        \begin{tabular}{|lcc|}
            \hline
                                & $Q_1$                             & $Q^\kappa_2$\\
            \hline
            $f_{L^2}$           & \ref{plot:recon_costfun:line1}   & \ref{plot:recon_costfun:line2}\\
            $f_{\mathbf{M}_1}$  & \ref{plot:recon_costfun:line3}   & \ref{plot:recon_costfun:line4}\\
            $f_{\symmdiff}$     & \ref{plot:recon_costfun:line5}   & \ref{plot:recon_costfun:line6}\\
            \hline
        \end{tabular}
    };

\end{tikzpicture}
      \tikzexternalenable
    \end{minipage}
  }

  \captionbox{Exact interface (solid circular arc) as well as reconstructed interfaces using the LVIRA (solid line) and PLVIRA (dashed curve) method.
  \label{fig:int_reconstruction_L2_showRecon}}
  [\twofigwidth]{
    \def\tikzWidth{\textwidth*0.24}
    \def\tikzHeight{\tikzWidth}
    \inputtikzorpdf{int_reconstruction_L2_showRecon}
  }\hfill
  \captionbox{Exact interface (solid circular arc) as well as reconstructed interfaces using the MOF (solid line) and PMOF (dashed curve) method.
  The circle marker corresponds to the reference centroid and the other markers (which overlap) correspond to the reconstructed centroids.
  \label{fig:int_reconstruction_M1_showRecon}}
  [\twofigwidth]{
    \def\tikzWidth{\textwidth*0.24}
    \def\tikzHeight{\tikzWidth}
    \inputtikzorpdf{int_reconstruction_M1_showRecon}
  }
\end{figure}
In~\cref{fig:int_reconstruction_costFun} we show the cost function $f_{L^2}$ for both search spaces as well as the optimal angles and also the error measured in terms of the symmetric difference~\cref{eqn:plic:symmdiff_fun}.
We note that $f_{L^2}(Q_2^\kappa)$ approximates $f_{\symmdiff}(Q_2^\kappa)$ quite well.
In~\cref{fig:int_reconstruction_L2_showRecon} we show the resulting reconstructed interfaces for each of the optimal angles.
As expected, the parabolic interface yields a significant increase in approximation accuracy of the interface.

\subsection{PMOF}
Parabolic reconstruction using the MOF cost function $f_{\+M_1}$ is less straightforward.
The reason for this is that the amount of information that is available is rather small: we have two degrees of freedom ($\+\eta \in S^1, \kappa \in \mathbb{R}$) and only the first moment $\+M^{l,*}_1 \in \mathbb{R}^2$ as reference data (recall that the volume fraction uniquely defines the shift $\phi(\+\eta, \kappa; M_{0,c}^l)$).
The following example illustrates that the MOF cost function cannot be used in combination with the search space $Q_2$.
\begin{example}
  Consider an interface defined by the level set $q(\+x) = y - \phi + \frac{\kappa}{2} x^2$ in the unit control volume $c = [-\frac{1}{2}, \frac{1}{2}]^2$, where $|\phi - \frac{\kappa}{8}| < \frac{1}{2}$.
  The liquid volume fraction can then be computed as
  \begin{equation}
    M_{0,c}^l = \oneDIntegral{-\frac{1}{2}}{\frac{1}{2}}{\oneDIntegral{-\frac{1}{2}}{\phi - \frac{\kappa}{2} x^2}{}{y}}{x} = \frac{1}{2} + \phi  - \frac{\kappa}{24}.
  \end{equation}
  Similarly we compute the first moment
  \begin{equation}
    \+M_1(c^l) = \oneDIntegral{-\frac{1}{2}}{\frac{1}{2}}{\oneDIntegral{-\frac{1}{2}}{\phi - \frac{\kappa}{2} x^2}{\+x}{y}}{x} = \frac{1}{2}\begin{bmatrix}
      0\\
      \frac{\kappa^2}{720} + M_{0,c}^l(M_{0,c}^l-1)
    \end{bmatrix},
  \end{equation}
  where we substituted $\phi = M_{0,c}^l - \frac{1}{2} + \frac{\kappa}{24}$.
  It follows that for this example the sign of the curvature does not affect the first moment, resulting in at least two distinct solutions to the optimisation problem~\cref{eqn:overview:optim}.
\end{example}
Hence for using the MOF method with a parabolic reconstruction we must either decrease the dimensionality of the search space, or add more information such as higher order moments or some information from neighbouring control volumes.

We choose for the former option: the parabolic MOF (PMOF) method refers to the combination of $f_{\+M_1}$ and $Q_2^\kappa$.
Note that we could also interpret this as the latter option, since we include information from neighbouring control volumes indirectly via the GHF curvature.

In~\cref{fig:int_reconstruction_costFun} we show the cost function $f_{\+M_1}$ for both search spaces as well as the optimal angles and also the error measured in terms of the symmetric difference.
The resulting reconstructed interfaces are shown in~\cref{fig:int_reconstruction_M1_showRecon}.
We find, for this example, that even though the PMOF method yields similar accuracy in terms of the cost function $f_{\+M_1}$ when compared to the MOF method, the interface approximation is significantly improved to the point where it can hardly be visually distinguished from the exact interface, which is a circular arc.

\subsection{An algorithm for intersecting a polygon with a parabola}\label{sec:parabolic:intersect}
The evaluation of both cost functions, as well as the computation of the right-hand side of~\cref{eqn:remap:remap_m0} require the computation of either the zeroth or first moment of the intersection of some polygon $\preimage$ with a parabola\footnote{The PROST method~\citep{Renardy2002} computes this intersection approximately.
However since we consider 2D only, the intersection can still rather easily be computed exactly.} defined as the zero level set of some $q \in Q_2$.
That is, we want to compute
\begin{equation}
  M_0(\preimage \cap \halfspace{q}), \quad \+M_1(\preimage \cap \halfspace{q}).
\end{equation}
The description of the algorithm is limited to to the case $\kappa \le 0$.
This is not restrictive, since if $q$ corresponds to an interface with positive curvature then we instead compute the moments using
\begin{equation}
  M_0(\preimage \cap \halfspace{q}) = M_0(\preimage) - M_0(\preimage \cap (\mathbb{R}^2 \setminus \halfspace{q})) = M_0(\preimage) - M_0(\preimage \cap \halfspace{-q}),
\end{equation}
where $-q$ again corresponds to an interface of negative curvature.

\begin{figure}\subcaptionbox{The original polygon $\preimage$ and parabola defined by the level set function $q$.
  \label{fig:app:parabola_polygon:example_1}}
  [\threefigwidth]{
    \import{inkscape/}{parabola_polygon_intersection_1\grayscale.pdf_tex}
  }\hfill
  \subcaptionbox{The new polygon $\hat{\preimage}$ (consisting of two parts) as well as the parabola.
  \label{fig:app:parabola_polygon:example_2}}
  [\threefigwidth]{
\begingroup%
  \makeatletter%
  \providecommand\color[2][]{%
    \errmessage{(Inkscape) Color is used for the text in Inkscape, but the package 'color.sty' is not loaded}%
    \renewcommand\color[2][]{}%
  }%
  \providecommand\transparent[1]{%
    \errmessage{(Inkscape) Transparency is used (non-zero) for the text in Inkscape, but the package 'transparent.sty' is not loaded}%
    \renewcommand\transparent[1]{}%
  }%
  \providecommand\rotatebox[2]{#2}%
  \newcommand*\fsize{\dimexpr\f@size pt\relax}%
  \newcommand*\lineheight[1]{\fontsize{\fsize}{#1\fsize}\selectfont}%
  \ifx\svgwidth\undefined%
    \setlength{\unitlength}{153.58048931bp}%
    \ifx\svgscale\undefined%
      \relax%
    \else%
      \setlength{\unitlength}{\unitlength * \real{\svgscale}}%
    \fi%
  \else%
    \setlength{\unitlength}{\svgwidth}%
  \fi%
  \global\let\svgwidth\undefined%
  \global\let\svgscale\undefined%
  \makeatother%
  \begin{picture}(1,0.70247997)%
    \lineheight{1}%
    \setlength\tabcolsep{0pt}%
    \put(0,0){\includegraphics[width=\unitlength,page=1]{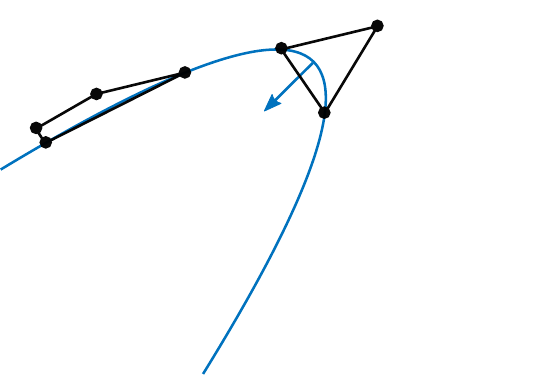}}%
    \put(0.59745562,0.66290382){\color[rgb]{0,0,0}\makebox(0,0)[lt]{\lineheight{1.25}\smash{\begin{tabular}[t]{l}$\hat{e}_1$\end{tabular}}}}%
    \put(0.52342414,0.48861511){\color[rgb]{0,0,0}\makebox(0,0)[lt]{\lineheight{1.25}\smash{\begin{tabular}[t]{l}$\hat{e}_2$\end{tabular}}}}%
    \put(0.6776171,0.53957792){\color[rgb]{0,0,0}\makebox(0,0)[lt]{\lineheight{1.25}\smash{\begin{tabular}[t]{l}$\hat{e}_3$\end{tabular}}}}%
    \put(0.2361125,0.5728759){\color[rgb]{0,0,0}\makebox(0,0)[lt]{\lineheight{1.25}\smash{\begin{tabular}[t]{l}$\hat{e}_4$\end{tabular}}}}%
    \put(0.08442241,0.52724537){\color[rgb]{0,0,0}\makebox(0,0)[lt]{\lineheight{1.25}\smash{\begin{tabular}[t]{l}$\hat{e}_5$\end{tabular}}}}%
    \put(-0.00067208,0.40638686){\color[rgb]{0,0,0}\makebox(0,0)[lt]{\lineheight{1.25}\smash{\begin{tabular}[t]{l}$\hat{e}_6$\end{tabular}}}}%
    \put(0.22131349,0.44461777){\color[rgb]{0,0,0}\makebox(0,0)[lt]{\lineheight{1.25}\smash{\begin{tabular}[t]{l}$\hat{e}_7$\end{tabular}}}}%
  \end{picture}%
\endgroup%

  }\hfill
  \subcaptionbox{Illustration of the correction resulting from $\hat{e}_2 \in \hat{\mathcal{E}}^*$ (hatched region).
  \label{fig:app:parabola_polygon:example_3}}
  [\threefigwidth]{
\begingroup%
  \makeatletter%
  \providecommand\color[2][]{%
    \errmessage{(Inkscape) Color is used for the text in Inkscape, but the package 'color.sty' is not loaded}%
    \renewcommand\color[2][]{}%
  }%
  \providecommand\transparent[1]{%
    \errmessage{(Inkscape) Transparency is used (non-zero) for the text in Inkscape, but the package 'transparent.sty' is not loaded}%
    \renewcommand\transparent[1]{}%
  }%
  \providecommand\rotatebox[2]{#2}%
  \newcommand*\fsize{\dimexpr\f@size pt\relax}%
  \newcommand*\lineheight[1]{\fontsize{\fsize}{#1\fsize}\selectfont}%
  \ifx\svgwidth\undefined%
    \setlength{\unitlength}{141.32186096bp}%
    \ifx\svgscale\undefined%
      \relax%
    \else%
      \setlength{\unitlength}{\unitlength * \real{\svgscale}}%
    \fi%
  \else%
    \setlength{\unitlength}{\svgwidth}%
  \fi%
  \global\let\svgwidth\undefined%
  \global\let\svgscale\undefined%
  \makeatother%
  \begin{picture}(1,0.70707839)%
    \lineheight{1}%
    \setlength\tabcolsep{0pt}%
    \put(0,0){\includegraphics[width=\unitlength,page=1]{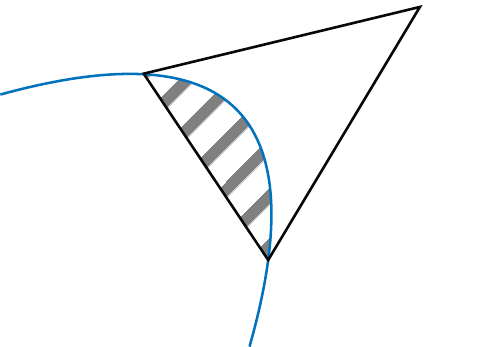}}%
    \put(0.57426065,0.08722577){\color[rgb]{0,0,0}\makebox(0,0)[lt]{\lineheight{1.25}\smash{\begin{tabular}[t]{l}$\hat{\+v}_l$\end{tabular}}}}%
    \put(0.18062093,0.63099942){\color[rgb]{0,0,0}\makebox(0,0)[lt]{\lineheight{1.25}\smash{\begin{tabular}[t]{l}$\hat{\+v}_r$\end{tabular}}}}%
    \put(0.41280316,0.54570789){\color[rgb]{0,0.44705882,0.74509804}\makebox(0,0)[lt]{\lineheight{1.25}\smash{\begin{tabular}[t]{l}$\eta = s - \frac{\kappa}{2} \tau^2$\end{tabular}}}}%
    \put(0.01412609,0.57992242){\color[rgb]{0,0,0.13333333}\rotatebox{-55.50414067}{\makebox(0,0)[lt]{\lineheight{1.25}\smash{\begin{tabular}[t]{l}$\eta = \frac{\hat\eta_{r}(\tau - \hat\tau_{l}) + \hat\eta_l(\hat\tau_{r} - \tau)}{\hat\tau_{r} - \hat\tau_{l}}$\end{tabular}}}}}%
    \put(0,0){\includegraphics[width=\unitlength,page=2]{parabola_polygon_intersection_3.pdf}}%
  \end{picture}%
\endgroup%

  }
  \caption{Intersection of a polygon $\preimage$ with a parabola $q \in Q_2$.}
  \label{fig:app:parabola_polygon:example}
\end{figure}
Let the vertices of the polygon $\preimage$ be denoted by $\+v_k$, for $k = 1, \ldots, N_\preimage$.
Using the level set function $q$ we can determine for each vertex whether it is below ($q(\+v_k) < 0$) or above ($q(\+v_k) > 0$) the parabola, and therefore decide whether the vertex is retained or removed respectively.
For each edge we can then determine to which of the following cases the edge belongs (see also~\cref{fig:app:parabola_polygon:example_1}).
\begin{enumerate}
  \item Both vertices are removed, since $\kappa \le 0$ it follows that the edge is not trisected and is simply removed (see edge $e_4$)
  \item One of the vertices is retained, hence the edge is bisected (see edges $e_3$ and $e_5$)
  \item Both vertices are retained
  \begin{enumerate}
    \item There is no intersection of the edge with the parabola, the edge is retained (see edge $e_2$)
    \item There is an intersection of the edge with the parabola, the edge is trisected and the section in the middle (which may have zero length) is removed (see edge $e_1$)
  \end{enumerate}
\end{enumerate}
For determining whether an intersection exists we parametrize the edge $e_k$ by
\begin{equation}
  e_k = \set{\+e_k(t) = t \+v_k + (1-t) \+v_{k+1}}{t \in [0, 1]},
\end{equation}
and solve the quadratic equation $q(\+e_k(t)) = 0$ for $t$.
If no real root exists in the interval $[0, 1]$ then the edge is not trisected.
We use the same approach for determining the new bi or trisection vertices of edges.

We then form a new polygon $\hat{\preimage}$ (with edges $\hat{\mathcal{E}}$, see~\cref{fig:app:parabola_polygon:example_2}) which is defined by the insertion and removal of vertices and edges according to the previous description.
A subset of the edges is not actually part of the intersection and should in fact be `replaced' by part of the parabola, we denote these edges by $\hat{\mathcal{E}}^*$ (in the example we find $\hat{\mathcal{E}}^* = \{\hat{e}_2, \hat{e}_7\}$).

The intersection volume $M_0(\preimage \cap \halfspace{q})$ can now be computed as the zeroth moment of $\hat{\preimage}$ with corrections that stem from the edges $\hat{\mathcal{E}}^*$, shown as the hatched region in~\cref{fig:app:parabola_polygon:example_3} for the edge $\hat{e}_2$.
For the computation of such a correction we parametrise each such edge by the normal and tangential co-ordinates relative to the control volume centroid $\+x_c$.
That is, if $\hat{\+v}_l$ and $\hat{\+v}_r$ denote the endpoints of such an edge, then the tangential and normal co-ordinates of the vertex $\hat{\+v}_l$ are given by
\begin{equation}
  \hat\tau_l = \+\tau \cdot (\hat{\+v}_l - \+x_c), \quad \hat\eta_l = \+\eta \cdot (\hat{\+v}_l - \+x_c),
\end{equation}
respectively, and similarly for $\hat{\+v}_r$.
Provided with these co-ordinates we can parametrise an edge $\hat{e}$ as follows
\begin{equation}
  \hat{e} = \set{\tau \+\tau + \frac{\hat\eta_{r}(\tau - \hat\tau_{l}) + \hat\eta_l(\hat\tau_{r} - \tau)}{\hat\tau_{r} - \hat\tau_{l}}\+\eta}{\tau \in [\hat\tau_l, \hat\tau_r]}.
\end{equation}
Similarly, the parabola can be expressed as
\begin{equation}
  \eta = \phi - \frac{\kappa}{2} \tau^2.
\end{equation}
It follows that the corrected volume can be expressed as an integral between the parabola and the edge $\hat{e}$, resulting in
\begin{equation}
  M_0(\preimage \cap \halfspace{q}) = M_0(\hat{\preimage}) + \sum_{\hat{e} \in \hat{\mathcal{E}}^*} \oneDIntegral{\hat\tau_l}{\hat\tau_{r}}{\oneDIntegral{\frac{\hat\eta_{r}(\tau - \hat\tau_{l}) + \hat\eta_l(\hat\tau_{r} - \tau)}{\hat\tau_{r} - \hat\tau_{l}}}{\phi - \frac{\kappa}{2}\tau^2}{}{\eta}}{\tau},
\end{equation}
see also~\cref{fig:app:parabola_polygon:example_3}.
The first moment can be computed in a similar way
\begin{equation}
  \+M_1(\preimage \cap \halfspace{q}) = \+M_1(\hat{\preimage}) + \begin{bmatrix}
    \+\eta & \+\tau
  \end{bmatrix}
  \squarepar{\sum_{\hat{e} \in \hat{\mathcal{E}}^*} \oneDIntegral{\hat\tau_l}{\hat\tau_{r}}{\oneDIntegral{\frac{\hat\eta_{r}(\tau - \hat\tau_{l}) + \hat\eta_l(\hat\tau_{r} - \tau)}{\hat\tau_{r} - \hat\tau_{l}}}{\phi - \frac{\kappa}{2}\tau^2}{\begin{bmatrix}
    \eta \\ \tau
  \end{bmatrix}}{\eta}}{\tau}}.
\end{equation}
Note that the correction terms for the volume as well as the first moment are integrals of polynomial functions, and can therefore be evaluated analytically.
An implementation of the intersection algorithm can be found in~\citet{intersection_alg}.

\subsection{Volume enforcement}
For PLIC methods the computation of $\phi(\+\eta; M_{0,c}^l)$ can be done directly using the methods described in~\citet{Scardovelli2000}.
In principle we could distinguish all possible interface configurations for PPIC methods as well and derive analytical formulas for $\phi(\+\eta, \kappa; M_{0,c}^l)$.
For the time being, we instead define $\phi(\+\eta, \kappa; M_{0,c}^l)$ as the root of the following function
\begin{equation}\label{eqn:para:enforcement:g}
  g(\phi) = M_0\squarepar{c \cap \halfspace{\+\eta\cdot(\+x - \+x_c) - \phi + \frac{\kappa}{2}(\+\tau\cdot(\+x - \+x_c))^2}} - M_{0,c}^l,
\end{equation}
which ensures that the liquid volume resulting from the reconstructed interface corresponds to the reference liquid volume $M_{0,c}^l$.
We use Brent's algorithm~\citep{Brent1971} to find the root of~\cref{eqn:para:enforcement:g}, which is guaranteed to converge provided an initial interval $[\phi_-, \phi_+]$ is given for which $g(\phi_-) g(\phi_+) < 0$.
In~\cref{sec:app:brent_bound} we explain how the initial interval $[\phi_-, \phi_+]$ is determined. 
Using these initial intervals we require six evaluations of $g$ on average to obtain machine accuracy in the relative error.

\subsection{Numerical optimisation}
We use the limited-memory Broyden Fletcher Goldfarb Shannon (LBFGS) algorithm \citep{liu1989limited} in conjunction with the More-Thuente line search algorithm \citep{more1994line} for solving the optimisation problem given by~\cref{eqn:overview:optim}.
For the LVIRA cost function we apply the algorithm to a nondimensionalised variant of $f_{L^2}^2$, whereas for the MOF cost function we apply the algorithm to a nondimensionalised variant of $f_{\+M_1}$.
The optimisation requires the computation of derivatives of the cost function (w.r.t. the normal angle $\vartheta$ as well as the curvature $\kappa$ if the PROST method is used), and to this end we have derived the exact linearisation of the cost functions in~\cref{sec:app:linearisations}.

The optimisation algorithm is terminated if the error estimate is below $\min(10^{-2}, (h / L)^2)$, where $L$ is a typical length scale in the problem under consideration.
The tolerances introduced here are determined via numerical experiments, with the goal of ensuring that the tolerance is sufficiently small such that the resulting reconstruction accuracy is no longer affected by the accuracy to which the optimisation problem~\cref{eqn:overview:optim} is solved.

\section{Results}\label{sec:results}
Here we investigate the accuracy of the proposed reconstruction methods, as well as their coupling to the Lagrangian remapping method and finally also fully coupled (via surface tension) to the ComFLOW~\citep{Kleefsman2005,Wemmenhove,VanderPlas2017} two-phase Navier--Stokes solver.

\subsection{Reconstruction}\label{sec:results:reconstruction}
We consider the reconstruction accuracy using a `flower shape' defined by the zero level set of
\begin{equation}\label{eqn:results:recon:levelset}
  q(\+x) = R \curlypar{1 + 0.1 \sin\squarepar{0.1 + 5 \atan\roundpar{\frac{y - y_0}{x - x_0}}}} - \abs{\+x - \+x_0}_2,
\end{equation}
centred around $\+x_0 = \begin{bmatrix}0.01 & 0.03\end{bmatrix}^T$ where the computational domain is given by $\Omega = [-0.5, 0.5]^2$.
We let $R = 0.3$.
An example is shown in~\cref{fig:intro:flower_shape}.
The zeroth and first moments are initialised exactly and the resulting reconstruction accuracy is measured using the area of the symmetric difference between the reconstructed liquid domain and the exact liquid domain as well as the $L^\infty$-error of the first moment.
\begin{figure}
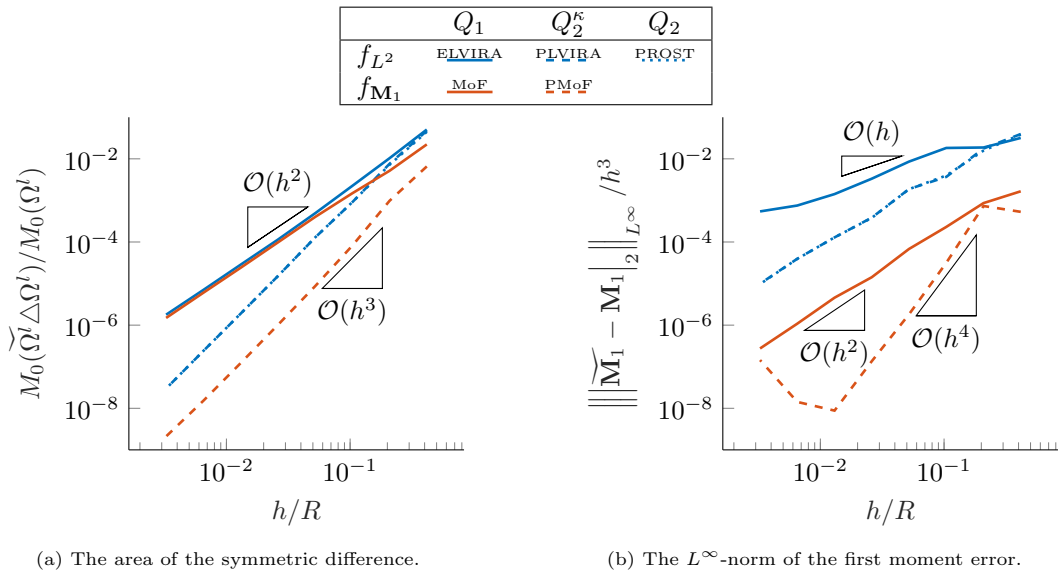

  \begin{subfigure}[t]{\textwidth}
    \centering
    \inputtikzorpdf{int_validation_optim_legend2}
  \end{subfigure}
  
  \subcaptionbox{The area of the symmetric difference.
  \label{fig:int_validation_recon_cmf_symmDiff_flower}}
  [\twofigwidth]{
    \def\tikzWidth{\textwidth*0.3}
    \def\tikzHeight{\textwidth*0.3}
    \inputtikzorpdf{int_validation_recon_cmf_symmDiff_flower}
  }\hfill
  \subcaptionbox{The $L^\infty$-norm of the first moment error.
  \label{fig:int_validation_recon_cmf_firstMoment_flower}}
  [\twofigwidth]{
    \def\tikzWidth{\textwidth*0.3}
    \def\tikzHeight{\textwidth*0.3}
    \inputtikzorpdf{int_validation_recon_cmf_firstMoment_flower}
  }
  \caption{Reconstruction accuracy measured in terms of the symmetric difference as well as the $L^\infty$-norm of the first moment (normalised by $h^3$).
    Exact reference moments were used corresponding to the flower shape~\cref{eqn:results:recon:levelset}.
    Note that the results for PLVIRA and PROST essentially overlap.
  }
  \label{fig:int_validation_reconstruction}
\end{figure}

In~\cref{fig:int_validation_recon_cmf_symmDiff_flower} we show the accuracy in terms of the area of the symmetric difference.
No significant difference in reconstruction accuracy is found between the PROST and PLVIRA methods, confirming that indeed the GHF curvature is sufficiently accurate.
We obtain one additional order of accuracy when using PPIC as compared to PLIC, as desired.
A significant increase in accuracy is found for the PMOF method as compared to the PLVIRA method (about one order of magnitude).

These results seem to suggest that the interface representation accuracy goes up by one order when using $Q_2^\kappa$ or $Q_2$ as compared to using $Q_1$.
Hence in the context of~\cref{thm:intro:fraction_error_bound}, we find that $r = 4$ and we are therefore hopeful that the curvature may now also converge for time-dependent problems.

Furthermore, in~\cref{fig:int_validation_recon_cmf_firstMoment_flower} we show the maximum error in approximating the first moment, which coincides with the maximal value of the MOF cost function~\cref{eqn:plic:mof:costfun}
\begin{equation}
  \norm{\approximate{\+M_1} - \+M_1}_{L^\infty} = \max_{c \in \mathcal{C}} f_{\+M_1}(q^*_c).
\end{equation}
The error is shown relative to the magnitude of the first moment ($\mathcal{O}(h^3)$).
As expected, we find that for a given search space the MOF cost function~\eqref{eqn:plic:mof:costfun} yields the smallest error.
We find that the MOF method~\citep{Dyadechko2005} indeed results in fifth order accuracy in the first moment, as stated in~\cref{eqn:plic:mof:fmaccuracy}.
Using PPIC compared to PLIC results in an increase of one order of accuracy when the LVIRA cost function~\eqref{eqn:plic:lvira:costfun} is used.
The PMOF method results in an increase of two orders of accuracy in approximating the first moment, when compared to the MOF method.
Note that the PMOF error stops decreasing around $h = 2^{-8}$, corresponding to a first moment error of $10^{-8} \times (2^{-8})^3 \approx 6 \times 10^{-16}$ which is very similar to our machine accuracy $\epsilon_M = 2^{-52} \approx 2 \times 10^{-16}$.

\subsection{Vortex reverse}\label{sec:results:reverse}
\begin{figure}
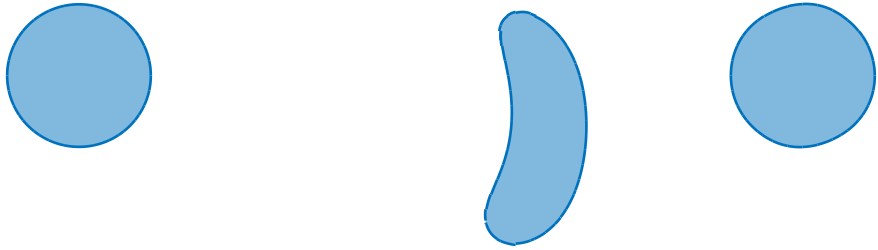

  \def\tikzWidth{\textwidth*0.3}
  \def\tikzHeight{\textwidth*0.3}
  \inputtikzorpdf{advection_example_plvira_tdx1_N32}
  \def\tikzWidth{\textwidth*0.3}
  \def\tikzHeight{\textwidth*0.3}
  \inputtikzorpdf{advection_example_plvira_tdx2_N32}
  \def\tikzWidth{\textwidth*0.3}
  \def\tikzHeight{\textwidth*0.3}
  \inputtikzorpdf{advection_example_plvira_tdx3_N32}
  \caption{The initial, intermediate and final approximate liquid domain at $t = 0, T/2$ and $t = T$ respectively, from left to right.
  Here the PLVIRA reconstruction method was used with $h / R \approx 1 / 5$.}
  \label{fig:int_validation_advection:example}
\end{figure}
The time-dependent accuracy of the proposed interface advection method is evaluated using the classical vortex reverse problem~\citep{Rider1997} where the interface undergoes a reversible deformation defined by the stream function
\begin{equation}
  \Psi = \frac{\cos(\pi t/T)}{\pi} \sin(\pi x)^2 \cos(\pi y)^2,
\end{equation}
for some period $T$.
Here the initial interface is a circle of radius $R = 0.15$ centred at $\+x_0 = \begin{bmatrix}0.5 & 0.75\end{bmatrix}^T$.
The accuracy of the Lagrangian remapping method is then measured using the area of the symmetric difference, as well as the $L^\infty$-norms of the approximation errors at $t = T$ of the first moment, the volume fraction (or normalised zeroth moment) as well as the curvature.
We let $T = 1$ such that it is relatively easy to resolve the interface shape, as shown in~\cref{fig:int_validation_advection:example}.

\begin{figure}
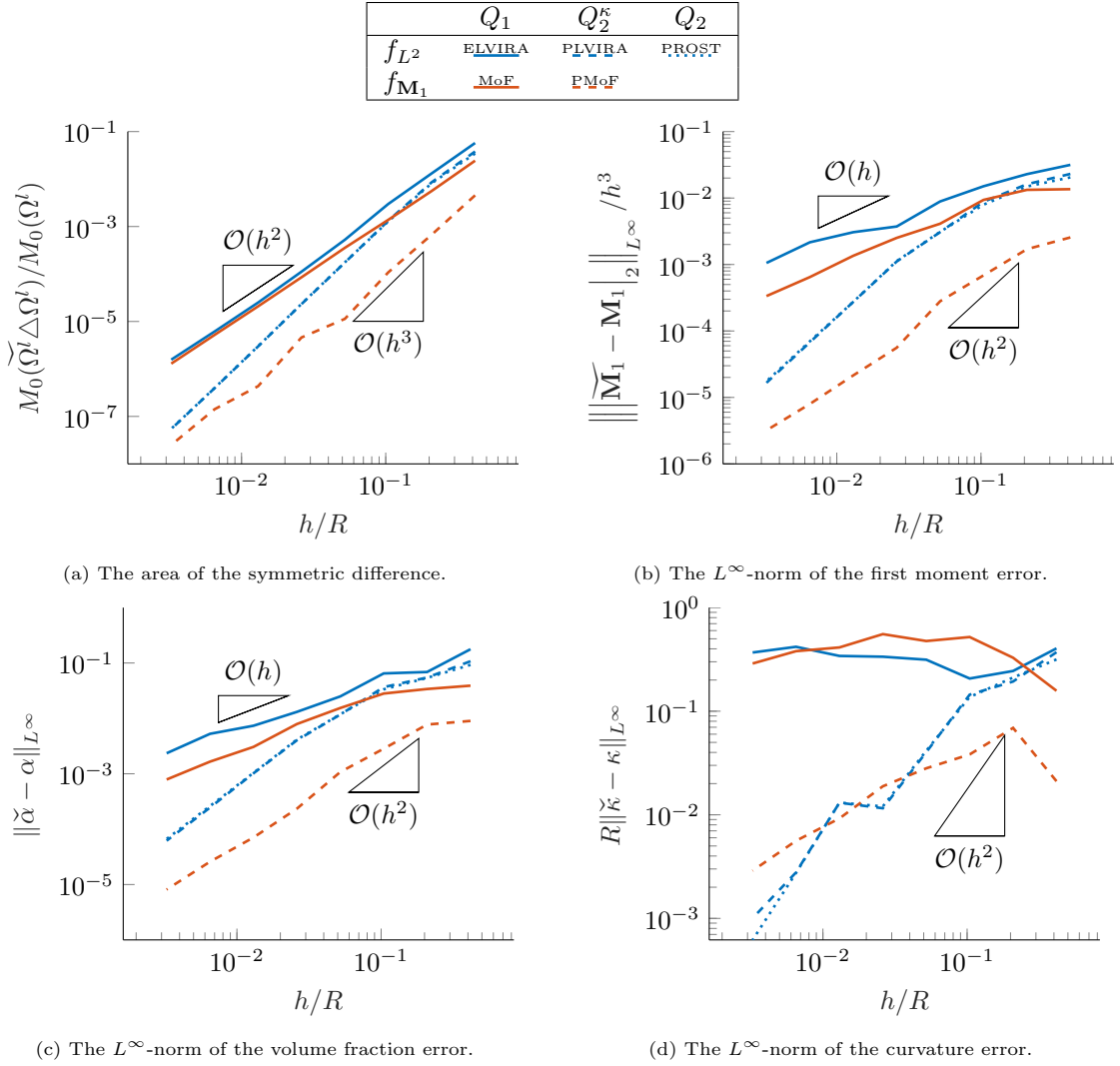

  \begin{subfigure}[t]{\textwidth}
    \centering
    \inputtikzorpdf{int_validation_optim_legend}
  \end{subfigure}
  \subcaptionbox{The area of the symmetric difference.
  \label{fig:int_validation_advection_vortex_T1_LEAS_path_sd}}
  [\twofigwidth]{
    \def\tikzWidth{\textwidth*0.35}
    \def\tikzHeight{\textwidth*0.3}
    \inputtikzorpdf{int_validation_advection_vortex_T1_LEAS_path_sd}
  }\hfill
  \subcaptionbox{The $L^\infty$-norm of the first moment error.
  \label{fig:int_validation_advection_vortex_T1_LEAS_path_firstMoment}}
  [\twofigwidth]{
    \def\tikzWidth{\textwidth*0.35}
    \def\tikzHeight{\textwidth*0.3}
    \inputtikzorpdf{int_validation_advection_vortex_T1_LEAS_path_firstMoment}
  }

  \subcaptionbox{The $L^\infty$-norm of the volume fraction error.
  \label{fig:int_validation_advection_vortex_T1_LEAS_path_LInf}}
  [\twofigwidth]{
    \def\tikzWidth{\textwidth*0.35}
    \def\tikzHeight{\textwidth*0.3}
    \inputtikzorpdf{int_validation_advection_vortex_T1_LEAS_path_LInf}
  }\hfill
  \subcaptionbox{The $L^\infty$-norm of the curvature error.
  \label{fig:int_validation_advection_vortex_T1_LEAS_path_kappa}}
  [\twofigwidth]{
    \def\tikzWidth{\textwidth*0.35}
    \def\tikzHeight{\textwidth*0.3}
    \inputtikzorpdf{int_validation_advection_vortex_T1_LEAS_path_kappa}
  }
  \caption{Accuracy of the interface shape for the vortex reverse problem at $t = T$.
  The accuracy is measured using the symmetric difference as well as the $L^\infty$-norm of the errors in the first moment, volume fraction (normalised zeroth moment) and curvature.
  Note that the results for PLVIRA and PROST essentially overlap.}
  \label{fig:int_validation_advection}
\end{figure}

The area of the symmetric difference, shown in~\cref{fig:int_validation_advection_vortex_T1_LEAS_path_sd}, shows the same orders of accuracy as found in the reconstruction test, which was to be expected since~\cref{thm:intro:fraction_error_bound} shows that the reconstruction error is the lowest order term in the total error estimate.
For the $L^\infty$-norm of the first moment error, we find that the accuracy for time-dependent problems is limited to fifth order, which can be expected from~\cref{eqn:plic:mof:steptwoaccuracy}.
As with the reconstruction test, we find that for a given search space, the MOF cost function consistently yields the most accurate first moment.

In~\cref{fig:int_validation_advection_vortex_T1_LEAS_path_LInf} we show the resulting $L^\infty$-norm of the volume fraction error at $t = T$.
We find that the PPIC methods result in an increase of one order of accuracy as compared to the PLIC methods.
Provided with the numerically observed reconstruction accuracies ($r = 3$ for PLIC and $r = 4$ for PPIC) we can understand these results using~\cref{thm:intro:fraction_error_bound}.

Furthermore, in~\cref{fig:int_validation_advection_vortex_T1_LEAS_path_kappa}, we show the accuracy of the curvature at $t = T$ in terms of the $L^\infty$-norm.
We find that the PLIC methods do not lead to a convergent curvature, as can be explained from the discussion in~\cref{sec:curvature}.
The PMOF method yields a first-order accurate curvature, which we expected from the aforementioned discussion.
Interestingly, the PLVIRA and PROST methods yield a second-order accurate curvature, which we cannot explain using the simple, and apparently pessimistic, arguments used in~\cref{sec:curvature}.

To facilitate any future comparison with our proposed methods, we have included the numerical values used in~\cref{fig:int_validation_advection} as supplementary material.

\begin{figure}
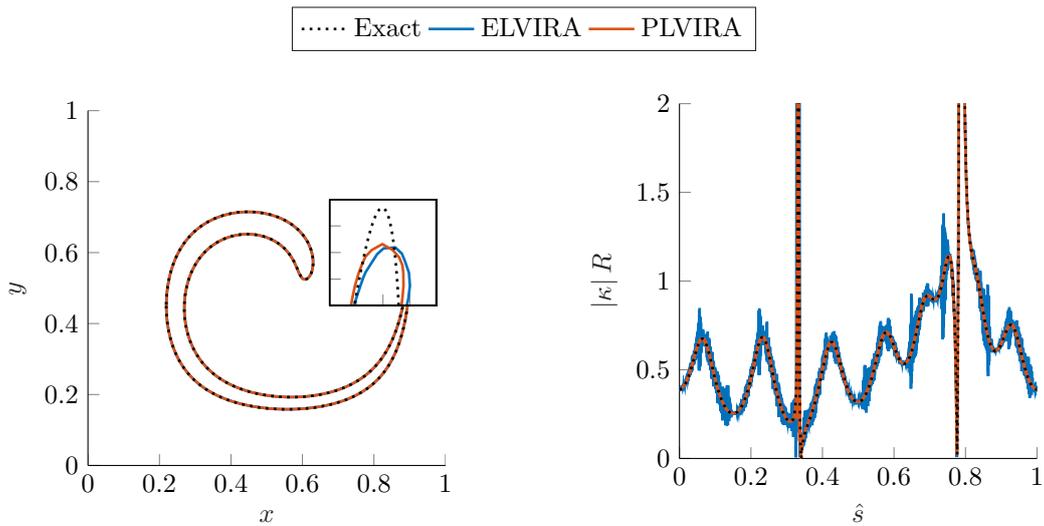

  \centering
  \color{black}
  \inputtikzorpdf{advection_long_legend}

  \subcaptionbox{Interface profile with an inset showing the region of largest curvature $\abs{\kappa} h \approx 5$.\label{fig:results:advection_long:profile}}
  [\twofigwidth]{
    \def\tikzWidth{\textwidth*0.32}
    \def\tikzHeight{\textwidth*0.32}
    \inputtikzorpdf{advection_long_profile_N1024_custom}
  }\hfill
  \subcaptionbox{Nondimensionalised curvature as function of the normalised arc length $\hat{s}$.\label{fig:results:advection_long:curvature}}
  [\twofigwidth]{
    \def\tikzWidth{\textwidth*0.32}
    \def\tikzHeight{\textwidth*0.32}
    \inputtikzorpdf{advection_long_curvature_N1024}
  }
  \caption{Solution to the vortex reverse problem with period $T = 4$ at $t = T / 2$.
  Here we used $h / R \approx 7 \times 2^{-10}$.}
\end{figure}
In order to test how the parabolic reconstruction methods respond to large deformations, we now repeat the same vortex reverse problem, but we now let $T = 4$.
The interface profile at $t = T/2$ is show in~\cref{fig:results:advection_long:profile}, where we only show the ELVIRA and PLVIRA methods to avoid cluttering the figure.
The inset shows a close up of the interface at the point of largest deformation. 
At this resolution we find $|\kappa|h \approx 5$, which means that the interface is unresolved at this point.
We find that, qualitatively, there is little difference between the interface profiles resulting from the ELVIRA and PLVIRA reconstruction methods.
The interface curvature as function of the normalised arc length $\hat{s}$ is shown in~\cref{fig:results:advection_long:curvature} at $t = T / 2$, which shows that the interface curvature resulting from the PLVIRA method is indistinguishable from the exact interface curvature, whereas the interface curvature resulting from the ELVIRA method is highly oscillatory and inaccurate.

\subsection{Galilean invariance and spurious currents}\label{sec:results:translation}
We now consider a droplet translation problem where the interface advection method is coupled to a two-phase Navier--Stokes solver.
The uniform velocity field is initialised as $\+u_0 = \+e_1 U$, and the initial interface corresponds to a droplet of radius $R = \SI{0.15}{m}$.
The domain is periodic with length $\SI{1}{m}$ and therefore at $t = k / U$ the droplet should have returned to its initial position, for any integer $k$.
As our proposed methods are thus far two-dimensional, and since our underlying Navier--Stokes solver is implemented in Cartesian coordinates, we are limited to considering a 2D droplet only.

The fluids are assumed inviscid and surface tension\footnote{Surface tension has been discretised using a well-balanced ghost fluid method~\citep{Popinet2018,Liu2000}. 
We make use of the GHF curvature approximation~\citep{Popinet2009}, except when the PROST method is used.} acts at the interface between the fluids, with surface energy coefficient $\sigma$.
The problem is thus fully characterised by the Weber number as well as by the density ratio, which are defined as
\begin{equation}
  \weber \defeq \frac{\rho^l U^2 R}{\sigma}, \quad \ratio{\rho} \defeq \frac{\rho^g}{\rho^l}.
\end{equation}
We fix the density ratio $\ratio{\rho} = 10^{-3}$ and consider three values of the Weber number: $\weber = 0$ corresponds to a stationary droplet (i.e., we consider the equilibrium rod as proposed by~\citet{Brackbill1992} for which $U = \SI{0}{m/s}$), $\weber = 1$ corresponds to a translating droplet with surface tension and $\weber = \infty$ corresponds to a translating droplet without surface tension (and thus uncoupled from the two-phase Navier--Stokes solver).
Note that in principle the initial velocity $\+u_0$ should leave the solution invariant up to translation since the two-phase Navier--Stokes equations are Galilean invariant.
It follows that the numerical solution should also become independent of the Weber number under mesh refinement.
For $\weber > 0$ we simulate until $k = 10$, and therefore stop the simulation at $t = 10/U$.
This means that the 2D droplet has passed through the periodic domain boundary $10$ times.

Coupling to the Navier--Stokes equations means that momentum is now transported as well.
For the transport of momentum we have implemented a momentum conserving convection scheme, very similar to the one presented in~\citep{Arrufat2021}, based on the advection method proposed by~\citet{Owkes2014}.
For this reason we will now use this advection method instead of the one discussed in~\cref{sec:remap:method}.

Based on the previous results, which favoured (in terms of curvature accuracy) the PPIC methods based on the LVIRA cost function, we now consider only the comparison between the ELVIRA, PLVIRA and PROST methods.
\begin{figure}
  \begin{subfigure}[t]{\textwidth}
    \centering
    \inputtikzorpdf{st_validation_translating_droplet_legend}
  \end{subfigure}
  \subcaptionbox{Example energy evolution for $h / R \approx 1 / 38$.
  \label{fig:st_validation_translating_droplet_energy_example_N256}}
  [\twofigwidth]{
    \def\tikzWidth{\textwidth*0.33}
    \def\tikzHeight{\textwidth*0.3}
    \inputtikzorpdf{st_validation_translating_droplet_energy_example_N256_custom}
  }\hfill
  \subcaptionbox{Convergence at $t = L / U$.
  \label{fig:st_validation_translating_droplet_energy_convergence}}
  [\twofigwidth]{
    \def\tikzWidth{\textwidth*0.3}
    \def\tikzHeight{\textwidth*0.3}
    \inputtikzorpdf{st_validation_translating_droplet_energy_convergence}
  }
  \caption{The kinetic energy in the moving frame of reference.
  Note that for $\weber = \infty$ the kinetic energy in the moving frame of reference vanishes since there is no coupling via surface tension.}
  \label{fig:st_validation_translating_droplet_energy}

  \subcaptionbox{The area of the symmetric difference.
  \label{fig:st_validation_translating_droplet_symmdiff_convergence}}
  [\twofigwidth]{
    \def\tikzWidth{\textwidth*0.3}
    \def\tikzHeight{\textwidth*0.3}
    \inputtikzorpdf{st_validation_translating_droplet_symmdiff_convergence}
  }\hfill
  \subcaptionbox{The $L^\infty$-norm of the curvature error.
  \label{fig:st_validation_translating_droplet_kappa_convergence}}
  [\twofigwidth]{
    \def\tikzWidth{\textwidth*0.33}
    \def\tikzHeight{\textwidth*0.3}
    \inputtikzorpdf{st_validation_translating_droplet_kappa_convergence}
  }
  \caption{Convergence of the interface shape using several error measures for the droplet translation problem at $t = L / U$.}
  \label{fig:st_validation_translating_droplet} 
\end{figure}

In~\cref{fig:st_validation_translating_droplet_energy_example_N256} we show an example of the evolution of the kinetic energy in the frame of reference of the droplet for $\weber \in \{0, 1\}$ (the kinetic energy in the frame of reference of the droplet vanishes for $\weber = \infty$)
\begin{equation}
  E_k(t) = \half\integral{\Omega}{\rho(t, \+x) \abs{\+u(t, \+x) - \+u_0}_2^2}{V}.
\end{equation}
Time has been nondimensionalised using the oscillation period $T_\sigma(4)$ of a 2D droplet~\citep{Lamb1932}, where\footnote{In principle all modes will be triggered, however higher modes require more energy since the surface area increases with increasing $k$, hence we expect the lowest value of $k$ to be dominant.
Then, by symmetry of the domain, we find that $k=4$ is the lowest mode which has the same symmetries as the square domain.}
\begin{equation}
  T_\sigma(k) = 2 \pi\squarepar{\frac{R^3 (\rho^g + \rho^l)}{\sigma k (k^2 - 1)}}^{1/2}.
\end{equation}
The kinetic energy has two maxima per oscillation period and therefore we find oscillations in the kinetic energy of period $T_\sigma(4)/2$.

We note that the PLVIRA method yields a kinetic energy evolution which is essentially independent of the Weber number, as expected from the Galilean invariance of the Navier--Stokes equations.
Moreover we find that the resulting oscillation in kinetic energy is of the correct physical frequency, and is therefore absent of spurious currents, regardless of the Weber number.
For the ELVIRA method we find that for $\weber = 1$, despite using a well-balanced surface tension method, spurious kinetic energy is present due to the insufficiently accurate curvature.
Differences between the PROST and PLVIRA method finally start to show: for both finite Weber numbers, spurious kinetic energy is generated when the PROST method is used.
Furthermore we show the maximum kinetic energy in~\cref{fig:st_validation_translating_droplet_energy_convergence}, which shows that the ELVIRA and PROST methods do not converge in terms of the maximum kinetic energy (which should vanish under mesh refinement) for $\weber = 1$.
The PLVIRA method does converge and shows convergence similar that of $\weber = 0$.

Finally, in~\cref{fig:st_validation_translating_droplet} we show convergence of the interface shape using the area of the symmetric difference as well as the $L^\infty$-norm of the curvature error at $t = L / U$.
In terms of the symmetric difference we find that PLVIRA is the most accurate method for any of the Weber numbers.
As for the curvature error, we find that only the PLVIRA method exhibits convergence of the curvature regardless of the Weber number, which is in agreement with the observation that only the PLVIRA method results in a convergent kinetic energy regardless of the Weber number.

\subsection{Rising bubble}
As a final test case we consider the rising of a bubble due to buoyancy.
Since our numerical method is (thus far) two dimensional, and since the implementation is restricted to Cartesian coordinate systems, we are limited to considering a 2D rising bubble.
We will use the 2D benchmark proposed in~\citet{Hysing2009} (therein referred to as case 2), where an initially circular bubble of radius $R = \SI{0.25}{m}$ is considered at $\+x_0 = [0.5, 0.5]^T$ inside a domain given by $\Omega = [0, 1] \times [0, 2]$.
The fluid properties are given by $\rho^l = \SI{e3}{kg/m^3}, \rho^g = \SI{1}{kg/m^3}, \mu^l = \SI{10}{Pa.s}, \mu^g = \SI{e-1}{Pa.s}, \sigma = \SI{1.96}{J/m^2}$, and the gravitational acceleration is given by $g_z = \SI{-0.98}{m/s^2}$.

\begin{figure}
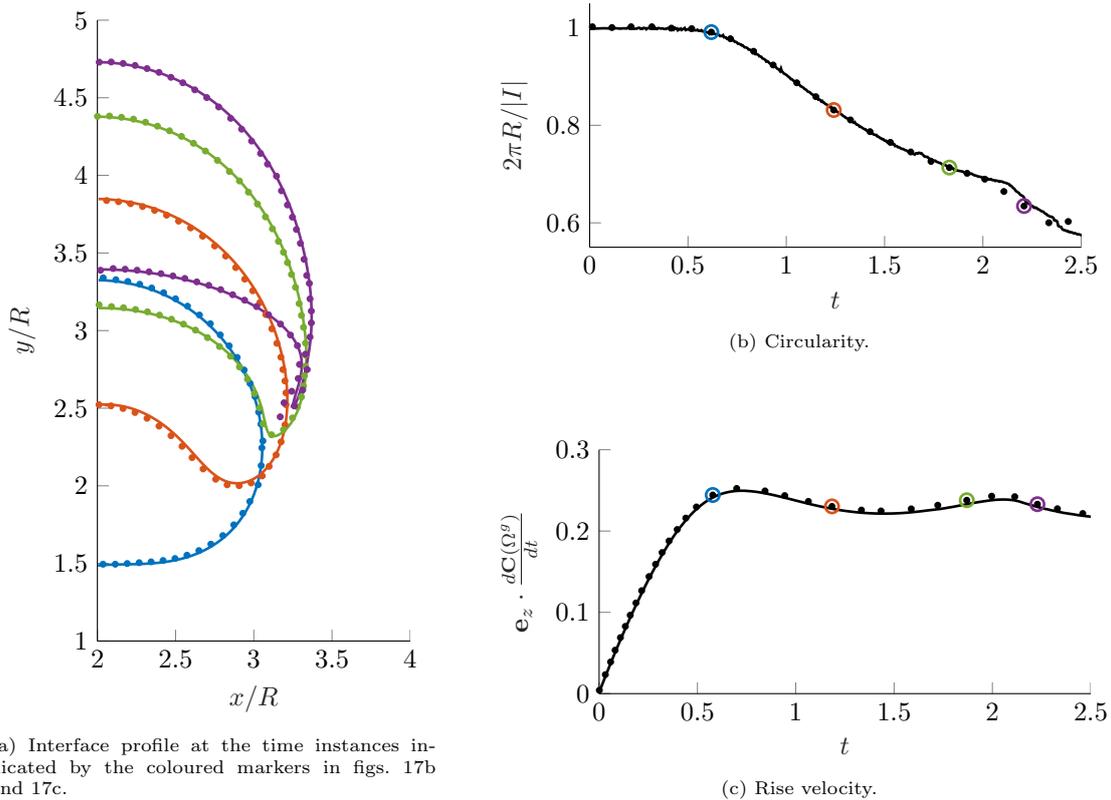

  \centering
  \color{black}
  
  \valign{#\cr
    \hsize=0.4\textwidth
    \begin{subfigure}{0.4\textwidth}
      \centering
      \def\tikzWidth{\textwidth*0.7}
      \def\tikzHeight{\textwidth*1.4}
      \inputtikzorpdf{rising_bubble_profile_plvira_kwamr6}
      \caption{Interface profile at the time instances indicated by the coloured markers in~\cref{fig:results:bubble:circularity,fig:results:bubble:velocity}.}\label{fig:results:bubble:profile}
    \end{subfigure}\cr\noalign{\hfill}
    \hsize=0.55\textwidth
    \begin{subfigure}{0.55\textwidth}
      \centering
      \def\tikzWidth{\textwidth*0.8}
      \def\tikzHeight{\textwidth*0.4}
      \inputtikzorpdf{rising_bubble_circularity_plvira_kwamr6}
      \caption{Circularity.}\label{fig:results:bubble:circularity}
    \end{subfigure}\vfill
    \begin{subfigure}{0.55\textwidth}
      \centering
      \def\tikzWidth{\textwidth*0.8}
      \def\tikzHeight{\textwidth*0.4}
      \inputtikzorpdf{rising_bubble_velocity_plvira_kwamr6}
      \caption{Rise velocity.}\label{fig:results:bubble:velocity}
    \end{subfigure}\cr
  }
  \caption{Comparison of the proposed PLVIRA method using curvature and vorticity based adaptive mesh refinement (solid lines) to a reference solution (markers) obtained from~\citet{Hysing2009}, resulting in a smallest mesh width of $h / R = 2^{-8}$ (with $R = \SI{0.25}{m}$).}
\end{figure}
Based on the previously presented results, which clearly favoured the PLVIRA reconstruction method, we will consider only this method combined with curvature and vorticity based adaptive mesh refinement, resulting in a smallest mesh width of $h / R = 2^{-8}$.
Results of the interface profile, circularity (initial interface length divided by interface length), as well as the rise velocity (time derivative of the gas centroid), compared to the reference solution from~\citet{Hysing2009} (we use the solution indicated by `TP2D $h = 1/640$', which uses a finite element discretisation of the Navier--Stokes equations, and is coupled to a level set method), are shown in~\cref{fig:results:bubble:profile,fig:results:bubble:circularity,fig:results:bubble:velocity} respectively.
Good agreement is obtained between our proposed solution and the reference solution.

\subsection{Computational cost}
It should be noted that the computational cost for the optimisation based methods will depend on the choice of numerical optimisation method, as well as the method by which the derivatives are computed. 
We use an exact linearisation of the cost functions when applied to a parabolic search space, as explained in~\cref{sec:app:derivative_mof,sec:app:derivative_l2} for the MOF and LVIRA cost function respectively.
We have also included the LVIRA method for comparison.
\begin{figure}
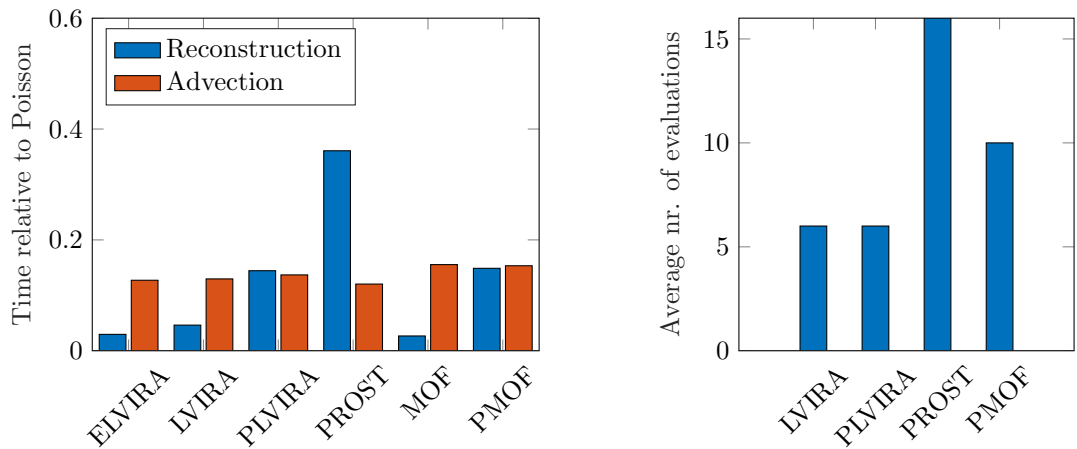

  \subcaptionbox{The wall-clock time relative to the time needed to solve the pressure Poisson problem.
  \label{fig:int_validation_computational_cost}}
  [0.525\textwidth]{
    \def\tikzWidth{\textwidth*0.4}
    \def\tikzHeight{\textwidth*0.3}
    \inputtikzorpdf{int_validation_computational_cost_LEAS_N128}
  }\hfill
  \subcaptionbox{Average number of cost function evaluations per interface reconstruction.
  \label{fig:int_validation_function_evaluations}}
  [0.425\textwidth]{
    \def\tikzWidth{\textwidth*0.3}
    \def\tikzHeight{\textwidth*0.3}
    \inputtikzorpdf{int_validation_function_evaluations_LEAS_N128}
  }
  \caption{Comparison of the reconstruction cost for several reconstruction methods.}
  \label{fig:int_validation_cost}
\end{figure}

In~\cref{fig:int_validation_computational_cost} we show the computational time relative to the time needed to solve the pressure Poisson problem for each of the reconstruction methods for a problem coupled to the two-phase Navier--Stokes solver.
For comparison we also show the computational time of the advection method, which is essentially the time needed to approximate the preimage, and compute the intersection volume given by~\cref{eqn:remap:approx:zerothmoment}.
Moreover, in~\cref{fig:int_validation_function_evaluations} we show the average number of cost function evaluations per reconstruction.

We find that using the ELVIRA method instead of the LVIRA method yields a slight reconstruction time reduction, confirming that the `E' in the acronym ELVIRA is justified.
Going from LVIRA to PLVIRA yields about a factor three in reconstruction time, which is acceptable since the reconstruction cost for the PLVIRA method is comparable to the cost of the advection method and still small when compared to the the time needed to solve the pressure Poisson problem.

The PROST method is more than twice as expensive as the PLVIRA method, which is most likely due to the fact that there are two unknowns in the optimisation problem, rather than only one for PLVIRA.

The PMOF method is about as expensive as the PLVIRA method, while requiring more function evaluations.
The fact that more iterations are required can likely be explained by the observation that the second derivative of the cost function is much larger (see for example~\cref{fig:int_reconstruction_costFun}), which makes it harder to find the local minimum.
On the other hand the evaluation of the cost function $f_{\+M_1}$ is cheaper than the evaluation of the cost function $f_{L^2}$ (used by PLVIRA), which results in the reconstruction time being comparable to that of PLVIRA.

\section{Conclusion}\label{sec:conclusion}
We showed that traditional geometric reconstruction methods, that are based on a piecewise linear approximation of the interface, do not yield convergence under mesh refinement of the GHF curvature for time-dependent advection problems.
Moreover we showed that the leading order error term of a Lagrangian remapping method is due to the PLIC reconstruction method, and have argued that this leading order term, which is first order in $h$, directly results in the observed lack of convergence of the curvature.

We have therefore presented two new geometric interface reconstruction methods, PLVIRA and PMOF, both examples in a class of PPIC methods which are based on a piecewise parabolic approximation of the interface.
The two methods are generalisations of the LVIRA and MOF reconstruction methods, the generalisation being in the search space: going from a space of linear interfaces to one of parabolic interfaces.
The corresponding cost functions have remained unchanged.

We have shown through numerical experiments that the proposed PPIC methods perform favourably when compared to the PLIC methods: the reconstruction accuracy is increased, resulting in a convergent curvature for time-dependent advection problems.

Furthermore, we have demonstrated that the PLVIRA method results in Weber number independent convergence for the droplet translation problem, contrary to the ELVIRA method.
Only for the droplet translation problem, where the interface advection method is coupled to a two-phase Navier--Stokes solver, do we observe different results for the PLVIRA and PROST methods: only the former results in Weber number independent convergence.
Exactly why the PROST method differs from the PLVIRA method on this point requires further investigation.

A test case involving a rising bubble was considered to demonstrate the applicability of the proposed interface reconstruction methods.
The resulting interface profile, circularity as well as rise velocity were found to be in good agreement with a reference solution.

The work presented here was limited to two spatial dimensions, obviously 3D simulations are of interest and therefore the geometric methods proposed here, in particular the computation of moments of the intersection of a polygon with a parabola, must be extended to 3D.
In~\citet{Renardy2002} a method for approximately computing the zeroth moment in 3D is proposed, they claim that this method yields second-order accurate volume fractions and should therefore be sufficiently accurate according to~\cref{thm:intro:fraction_error_bound}.

It would be interesting to have a full theoretical understanding of the reconstruction error; that is to show that $E^\mtext{Rec}_0 = \mathcal{O}(h^4)$ for the proposed PPIC methods.
Such an understanding could also provide a stability result (accuracy w.r.t. perturbations in the reference moments) that can be used to extend the single time-step consistency result of~\cref{thm:intro:fraction_error_bound} to a multiple time-step convergence result.

\begin{minipage}{.9\textwidth}
  \centering
  \vspace{1em}
  \textbf{Acknowledgements}
  \\
  This work is part of the research programme SLING, which is (partly) financed by the Netherlands Organisation for Scientific Research (NWO).
\end{minipage}

\setcounter{figure}{0}  
\appendix
\section{Proofs of the remapping error estimates}\label{sec:remap_accuracy}
\setcounter{lemma}{0}
\setcounter{table}{0}
\begin{lemma}[Error decomposition]
  
\end{lemma}
\begin{proof}
  The proof relies on the following three properties
  \begin{align}
    \abs{M_0(A) - M_0(B)} &\le M_0(A \symmdiff B)\label{eqn:app:property_1}\\
    A \symmdiff B &\subseteq (A \symmdiff C) \cup (C \symmdiff B)\label{eqn:app:property_2}\\
    (A \cap B) \symmdiff (C \cap D) &\subseteq (A \symmdiff C) \cup (B \symmdiff D)\label{eqn:app:property_3}
  \end{align}
  where $A, B, C, D \subseteq \mathbb{R}^2$.
  We will first prove each of these properties

  \begin{enumerate}
    \item[\ref{eqn:app:property_1}] Note that
    \begin{equation}
      \abs{M_0(A) - M_0(B)} = \abs{\integral{\mathbb{R}^2}{(\chi_A - \chi_B)}{V}} \le \integral{\mathbb{R}^2}{\abs{\chi_A - \chi_B}}{V} = M_0(A \symmdiff B),
    \end{equation}
    where $\chi_A, \chi_B$ are the indicator functions of the sets $A, B$ respectively.
    This proves~\cref{eqn:app:property_1}.
    \item[\ref{eqn:app:property_2}] Since the symmetric difference is both commutative and associative, it follows that
    \begin{equation}
      (A \symmdiff C) \symmdiff (C \symmdiff B) = (A \symmdiff B) \symmdiff (C \symmdiff C) = (A \symmdiff B) \symmdiff \emptyset = A \symmdiff B,
    \end{equation}
    from which~\cref{eqn:app:property_2} follows (the triangle inequality)
    \begin{equation}
      A \symmdiff B \subseteq (A \symmdiff C) \cup (C \symmdiff B).
    \end{equation}
    \item[\ref{eqn:app:property_3}] Note that the symmetric difference can also be written as
    \begin{equation}
      A \symmdiff B = (A \cap B^\complement) \cup (A^\complement \cap B),
    \end{equation}
    where $A^\complement$ denotes the complement of $A$ in $\mathbb{R}^2$.
    The proof is as found in~\citet[prop. 3.5]{Zhang2013}
    \begin{align}
      (A \cap B) \symmdiff (C \cap D) &= (A \cap B \cap (C \cap D)^\complement) \cup ((A \cap B)^\complement \cap C \cap D)\\
      &= (A \cap B \cap (C^\complement \cup D^\complement)) \cup ((A^\complement \cup B^\complement) \cap C \cap D)\\
      &= (A \cap B \cap C^\complement) \cup (A \cap B \cap D^\complement) \cup (A^\complement \cap C \cap D) \cup (B^\complement \cap C \cap D)\\
      &\subseteq (A \cap C^\complement) \cup (B \cap D^\complement) \cup (A^\complement \cap C) \cup (B^\complement \cap D)\\
      &= \squarepar{(A \cap C^\complement) \cup (A^\complement \cap C)} \cup \squarepar{(B \cap D^\complement)  \cup (B^\complement \cap D)}\\
      &= (A \symmdiff C) \cup (B \symmdiff D).
    \end{align}
  \end{enumerate}

  From~\cref{eqn:app:property_1} it follows that
  \begin{equation}
    \abs{\approximate{M_{0,c}^l} - M_{0,c}^l} = \abs{M_0\roundpar{\approximate{P_c} \cap \approximate{\Omega_c^{l}}} - M_0\roundpar{P_c \cap \Omega_c^{l}}} \le M_0\roundpar{\roundpar{\approximate{P_c} \cap \approximate{\Omega_c^{l}}} \symmdiff \roundpar{P_c \cap \Omega_c^{l}}}.
  \end{equation}
  Then, using~\cref{eqn:app:property_3} shows that
  \begin{equation}
    \roundpar{\approximate{P_c} \cap \approximate{\Omega_c^{l}}} \symmdiff \roundpar{P_c \cap \Omega_c^{l}}  \subseteq \roundpar{\approximate{\Omega_c^{l}} \symmdiff \Omega_c^{l}} \cup \roundpar{\approximate{\preimage_c} \symmdiff \preimage_c},
  \end{equation}
  from which it follows that
  \begin{equation}
    \abs{\approximate{M_{0,c}^l} - M_{0,c}^l} \le M_0\roundpar{\approximate{\Omega_c^{l}} \symmdiff \Omega_c^{l}} + M_0\roundpar{\approximate{\preimage_c} \symmdiff \preimage_c}.
  \end{equation}
  Finally, we use~\cref{eqn:app:property_2} (with $A = \approximate{\preimage_c}, B = \preimage_c$ and $C = \preimage^\mtext{Rep}_c$), resulting in
  \begin{equation}
    \abs{\approximate{M_{0,c}^l} - M_{0,c}^l} \le M_0\roundpar{\approximate{\Omega_c^{l}} \symmdiff \Omega_c^{l}} + M_0\roundpar{\approximate{\preimage_c} \symmdiff \preimage^\mtext{Rep}_c} + M_0\roundpar{\preimage^\mtext{Rep}_c \symmdiff \approximate{\preimage_c}}.
  \end{equation}
\end{proof}

Lemma~\ref{lem:error_remapping} is similar to proposition 3.3 of~\citet{Zhang2013}, with the most notable difference that we consider a remapping method rather than a donating region method and moreover we derived an improved error estimate of $E_0^\mtext{Rep}$ (which~\citet{Zhang2013} refers to as $E^\mtext{Rep}_\mtext{image}$) by making use of the approximate linearity preservation of the flow map.
Furthermore we use approximate pathline integration rather than approximate streamline integration.

\begin{lemma}[Remapping error estimates]
  
\end{lemma}
\begin{proof}
  \newcommand\approxRep[1]{{#1}^\mtext{Rep}}
  Let $\+p_0, \+p_1$ be the endpoints of an edge of $\partial c$, we parametrise the edge in the following way
  \begin{equation}
    \+p_s(\tau) = \Psi^\tau\roundpar{s \+p_1 + (1-s) \+p_0}, \quad (\tau, s) \in [-\delta, 0] \times [0, 1].
  \end{equation}
  Similarly, we parametrise the polygonal representation of the edge as
  \begin{equation}
    \approxRep{\+p}_s(\tau) = s \+p_1(\tau) + (1-s) \+p_0(\tau), \quad (\tau, s) \in [-\delta, 0] \times [0, 1],
  \end{equation}
  see also~\cref{fig:app:remap:representation_error}.
  \begin{figure}
    \centering
    \import{inkscape/}{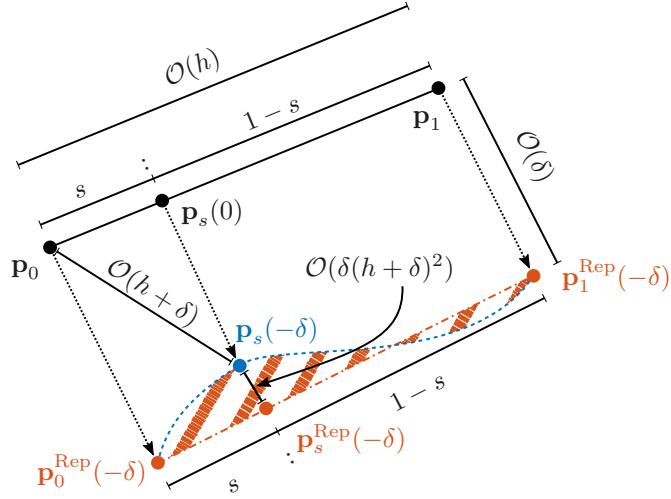}
    \caption{
      Illustration of the representation error for a single edge $e \subset \partial c$ of the control volume $c$, where the edge is defined by its endpoints $\+p_0, \+p_1$.
      The dotted arrows indicate the application of $\Psi^{-\delta}$ (exact integration backwards in time).
      The dashed curve corresponds to the exact preimage of the edge (given by $\+p_s(-\delta)$ for $s \in [0, 1]$), and the dash-dotted line corresponds to the polygonal representation (given by $\approxRep{\+p}_s(-\delta)$ for $s \in [0, 1]$).
      The area of the hatched region contributes to the representation error $E_0^\mtext{Rep}$ (for the total representation error we sum over all of the edges of the boundary of the control volume).
      See also~\cref{fig:remap:remapping_errors_rep}.
    }
    \label{fig:app:remap:representation_error}
  \end{figure}
  Note that the polygonal representation satisfies the following ODE
  \begin{equation}
    \frac{d}{d\tau} \approxRep{\+p}_s(\tau) = \approxRep{\+u}_s(\tau) \defeq \+u\roundpar{t^{n+1}+\tau, \+p_0} + \gradient \+u\roundpar{t^{n+1}+\tau, \+p_0} (\approxRep{\+p}_s(\tau) - \+p_0),
  \end{equation}
  from which it follows that the velocity $\approxRep{\+u}_s(\tau)$ agrees with the exact velocity field in the first \emph{two terms} of its Taylor series expansion around $\+x = \+p_0$ (this is what we refer to as `approximate linearity preservation of the flow map').
  It follows that, at $\+x = \+p_s(\tau)$, the difference between the two velocities results in a second-order term
  \begin{equation}\label{eqn:app:taylor}
    \abs{\approxRep{\+u}_s(\tau) - \+u(t+\tau, \+p_s(\tau))}_2 = \mathcal{O}(\abs{\+p_s(\tau) - \+p_0}^2_2) = \mathcal{O}((h+\delta)^2).
  \end{equation}
  Integration of~\cref{eqn:app:taylor} from $\tau = 0$ to $\tau = -\delta$ then yields
  \begin{equation}\label{eqn:app:linearity_preservation}
    \abs{\approxRep{\+p}_s(-\delta)  - \+p_s(-\delta)}_2 = \mathcal{O}(\delta (h+\delta)^2).
  \end{equation}
  Hence the area enclosed by the preimages $\approxRep{\+p}_s(-\delta)$ and $\+p_s(-\delta)$, for $s \in [0, 1]$, is bounded by $h$ times the estimate provided in~\cref{eqn:app:linearity_preservation}
  \begin{equation}
    E_0^\mtext{Rep} = \mathcal{O}(h \delta (h + \delta)^2).
  \end{equation}

  \newcommand\approxInt[1]{{#1}^\mtext{Int}}
  Let $\approxInt{\+x}$ denote the exactly integrated position resulting from approximating $\+u(t, \+x)$ by a linearly interpolated (in space and time) velocity field $\approxInt{\+u}(t, \+x)$.
  Using linear interpolation implies that, for $t \in [t^{(n+1)}, t^{(n)}]$
  \begin{equation}
    \abs{\approxInt{\+u}(t, \+x) - \+u(t, \+x)}_2 = \mathcal{O}((h+\delta)^2),
  \end{equation}
  which upon integration results in
  \begin{equation}
    \abs{\approxInt{\+x}(t^{(n)}) - \+x(t^{(n)})}_2 = \mathcal{O}(\delta (h+\delta)^2).
  \end{equation}
  Moreover if the time integration is approximated using a $q$-th order accurate method, resulting in the fully approximated position $\approximate{\+x}$, we find
  \begin{equation}
    \abs{\approximate{\+x}(t^{(n)}) - \+x(t^{(n)})}_2 \le \abs{\approximate{\+x}(t^{(n)}) - \approxInt{\+x}(t^{(n)})}_2 + \abs{\approxInt{\+x}(t^{(n)}) - \+x(t^{(n)})}_2 = \mathcal{O}(\delta^{q+1}) + \mathcal{O}(\delta (h+\delta)^2).
  \end{equation}
  It follows that the integration error can be estimated by
  \begin{equation}
    E_0^\mtext{Int} = \mathcal{O}(h \delta (h + \delta)^2 + h\delta^{q+1}).
  \end{equation}
\end{proof}

\section{Initial interval for the computation of the shift}\label{sec:app:brent_bound}
The use of Brent's method for finding the root of $g(\phi)$ (as defined in~\cref{eqn:para:enforcement:g}) requires an initial interval $[\phi_-, \phi_+]$ for which $g(\phi_-) g(\phi_+) < 0$.

As first initial guess we use $\phi_1 = \phi(\+\eta; M_{0,c}^l)$, i.e. we assume that the curvature vanishes.
Suppose that $g(\phi_1) > 0$, which happens if and only if $\kappa < 0$, we then require an $\phi_2 < \phi_1$ (hence $\phi_- = \phi_2, \phi_+ = \phi_1$) with $g(\phi_2) \le 0$.
If we can find a shift $\phi_2$ for which the reconstructed liquid domain has empty intersection with the cell $c$
\begin{equation}\label{eqn:para:enforce:emptyset}
  c \cap \halfspace{\+\eta\cdot(\+x - \+x_c) - \phi_2 + \frac{\kappa}{2}(\+\tau\cdot(\+x - \+x_c))^2} = \emptyset,
\end{equation}
then we find that $g(\phi_2) = 0 - M_{0,c}^l \le 0$.
Note that~\cref{eqn:para:enforce:emptyset} can also be written as
\begin{equation}
  \set{\+x \in c}{\+\eta\cdot(\+x - \+x_c) - \phi_2 + \frac{\kappa}{2}(\+\tau\cdot(\+x - \+x_c))^2 \le 0} = \emptyset,
\end{equation}
from which we conclude that
\begin{equation}
  \phi_2 \le \min_{\+x \in c} \roundpar{\frac{\kappa}{2}(\+\tau\cdot(\+x - \+x_c))^2 + \+\eta\cdot(\+x - \+x_c)},
\end{equation}
is a sufficient condition for~\cref{eqn:para:enforce:emptyset} to hold.
We now let $\phi_\eta^* = \roundpar{h_x|\eta_x| + h_y|\eta_y|}/{2}, \phi_\tau^* = \roundpar{h_x|\tau_x| + h_y|\tau_y|}/{2}$ (assuming a rectangular control volume with dimensions $h_x \times h_y$) and note that since $\kappa < 0$
\begin{equation}
  \frac{\kappa}{2}(\+\tau\cdot(\+x - \+x_c))^2 + \+\eta\cdot(\+x - \+x_c) \ge \frac{\kappa}{2}(\phi_\tau^*)^2 - \phi_\eta^*, \quad \+x \in c.
\end{equation} 
It follows that
\begin{equation}
  \phi_2 = \frac{\kappa}{2}(\phi_\tau^*)^2 - \phi_\eta^*,
\end{equation}
always results in $g(\phi_2) \le 0$. 

Similarly if $g(\phi_1) < 0$, which happens if and only if $\kappa > 0$, we require an $\phi_2 > \phi_1$ (hence $\phi_- = \phi_1, \phi_+ = \phi_2$) with $g(\phi_2) \ge 0$.
The following value of $\phi_2$ is always suitable
\begin{equation}
  \phi_2 = \frac{\kappa}{2}(\phi_\tau^*)^2 + \phi_\eta^* \ge \max_{\+x \in c} \roundpar{\frac{\kappa}{2}(\+\tau\cdot(\+x - \+x_c))^2 + \+\eta\cdot(\+x - \+x_c)},
\end{equation}
since for this value we find that $g(\phi_2) = M_0(c) - M_{0,c}^l \ge 0$.

\def\interfacePart{I_c(q)}
\def\interfaceLength{|\interfacePart|}
\section{Linearisation of cost functions}\label{sec:app:linearisations}
Here we will discuss the linearisation of the cost functions $f^2_{L^2}$ and $f_{\+M_1}^2$, as given by~\cref{eqn:plic:lvira:costfun,eqn:plic:mof:costfun}.
Each of the cost functions is defined in terms of the zeroth or first moment of the intersection of a polygon and the surrounding fluid, and therefore the derivative of each of the cost functions requires the derivative of this zeroth or first moment w.r.t. the normal angle $\vartheta$.
For the PROST method we also require the derivative of the zeroth moment w.r.t. the curvature $\kappa$.
We will now explain the computation of these derivatives, and note that an implementation is provided in~\citet{intersection_alg}.

\subsection{Derivative of the MOF cost function}\label{sec:app:derivative_mof}
Given the derivative of the first moment w.r.t. the normal angle $\vartheta$, the derivative of the cost function can be obtained using the chain rule.
In~\citet{Dyadechko2005} the derivative of the first moment w.r.t. the normal angle $\vartheta$ for a linear interface in a convex control volume is given as
\begin{equation}\label{eqn:app:derivative:plic}
  \frac{d}{d\vartheta} \+M_1(c \cap \halfspace{q}) = \frac{1}{12} \interfaceLength^3 \+\tau, \quad q \in Q_1,
\end{equation}
where $\interfaceLength$ denotes the length of the interface contained in the control volume $c$
\begin{equation}
  \interfacePart \defeq \set{\+x \in c}{q(\+x) = 0}.
\end{equation}
Note that the level set $q$ depends, via the normal $\+\eta = \begin{bmatrix}\cos \vartheta & \sin \vartheta\end{bmatrix}^T$ and tangent $\+\tau = \frac{d}{d\vartheta} \+\eta$, on the angle $\vartheta$.
The authors of~\citep{Dyadechko2005} give~\cref{eqn:app:derivative:plic} without proof or reference, and therefore the generalisation to $q \in Q_2$ requires us to start from scratch.

The Reynolds transport theorem yields the following expression for the derivative of the first moment
\begin{equation}\label{eqn:app:derivative:firstmomentder}
  \frac{d}{d\vartheta} \+M_1(c \cap l(q)) = \frac{d}{d\vartheta} \integral{c \cap l(q)}{\+x}{V} = \integral{\interfacePart}{(\hat{\+\eta} \cdot \+v_\vartheta) \+x}{S},
\end{equation}
where $\+v_\vartheta$ denotes the derivative of the interface position w.r.t. the normal angle $\vartheta$ and $\hat{\+\eta}$ denotes the local interface normal which is given by
\begin{equation}
  \hat{\+\eta} = \frac{\gradient q}{\abs{\gradient q}_2}, \quad \gradient q = \+\eta + \kappa (\+\tau \cdot (\+x - \+x_c)) \+\tau.
\end{equation}
Similarly, we obtain the derivative of the zeroth moment
\begin{equation}\label{eqn:app:derivative:volumeder}
  \frac{d}{d\vartheta} M_0(c \cap l(q)) = \frac{d}{d\vartheta} \integral{c \cap l(q)}{}{V} = \integral{\interfacePart}{\hat{\+\eta} \cdot \+v_\vartheta}{S},
\end{equation}
which is imposed to vanish since interface reconstruction is volume preserving.

To be able to further manipulate the integrals in~\cref{eqn:app:derivative:firstmomentder,eqn:app:derivative:volumeder} we introduce the following explicit parametrisation of a parabolic interface
\begin{equation}\label{eqn:app:derivative:explicit_interface}
  \+p(\tau) = \+x_c + \roundpar{\phi - \frac{\kappa}{2} \tau^2} \+\eta + \tau \+\tau,
\end{equation}
where $\phi = \phi(\vartheta, \kappa; M^l_{0,c})$.
The parametrisation~\cref{eqn:app:derivative:explicit_interface} provides an explicit expression for $\+v_\vartheta$
\begin{equation}\label{eqn:app:derivative:pos_der_varphi}
  \+v_\vartheta = \frac{d}{d\vartheta} \+p = \roundpar{\frac{d\phi}{d\vartheta}-\tau} \+\eta + \roundpar{\phi - \frac{\kappa}{2} \tau^2} \+\tau,
\end{equation}
where we have made use of $\frac{d}{d\vartheta}\+\eta = \+\tau$ and $\frac{d}{d\vartheta}\+\tau = -\+\eta$.

We now evaluate~\cref{eqn:app:derivative:volumeder} using this parametrisation, by first transforming the line integral over $\interfacePart$ to a sum of one-dimensional integrations over parts of the interface, each parametrised by $\tau$
\begin{equation}\label{eqn:app:derivative:intermediate_result}
  \frac{d}{d\vartheta} M_0(c \cap l(q)) = \integral{\interfacePart}{\hat{\+\eta} \cdot \+v_\vartheta}{S} = \sum_{\hat{e} \in \hat{\mathcal{E}}^*} \oneDIntegral{\hat\tau_l}{\hat\tau_r}{\hat{\+\eta} \cdot \squarepar{\roundpar{\frac{d\phi}{d\vartheta}-\tau} \+\eta + \roundpar{\phi - \frac{\kappa}{2} \tau^2} \+\tau}\abs{\frac{d\+p}{d\tau}}_2}{\tau},
\end{equation}
where $\hat{\mathcal{E}}^*$ is as introduced in~\cref{sec:parabolic:intersect}.
Note that
\begin{equation}\label{eqn:app:derivative:explicit_interface_der}
  \frac{d}{d\tau} \+p = \+\tau - \kappa\tau \+\eta \quad \Rightarrow \quad \abs{\frac{d}{d\tau} \+p}_2 = \sqrt{1 + (\kappa\tau)^2} = \abs{\gradient q}_2 \quad \Rightarrow \quad \hat{\+\eta}\abs{\frac{d\+p}{d\tau}}_2 = \gradient q,
\end{equation}
and therefore
\begin{align}
  \frac{d}{d\vartheta} M_0(c \cap l(q)) &= \sum_{\hat{e} \in \hat{\mathcal{E}}^*} \oneDIntegral{\hat\tau_l}{\hat\tau_r}{\squarepar{\+\eta + \kappa \tau \+\tau} \cdot \squarepar{\roundpar{\frac{d\phi}{d\vartheta}-\tau} \+\eta + \roundpar{\phi - \frac{\kappa}{2} \tau^2} \+\tau}}{\tau}\\
  &= \sum_{\hat{e} \in \hat{\mathcal{E}}^*} \oneDIntegral{\hat\tau_l}{\hat\tau_r}{\squarepar{\frac{d\phi}{d\vartheta} - \tau + \kappa \tau \roundpar{\phi - \frac{\kappa}{2} \tau^2}}}{\tau}\label{eqn:app:derivative:zeroth_moment_derivative}
\end{align}
Conservation of volume ($\frac{d}{d\vartheta} M_0(c \cap l(q)) = 0$) then yields an expression for $\frac{d\phi}{d\vartheta}$
\begin{equation}\label{eqn:app:derivative:volume_constraint}
  \frac{d\phi}{d\vartheta} = -\frac{\sum_{\hat{e} \in \hat{\mathcal{E}}^*} \oneDIntegral{\hat\tau_l}{\hat\tau_r}{\squarepar{\roundpar{\phi - \frac{\kappa}{2} \tau^2} \kappa \tau - \tau}}{\tau}}{\sum_{\hat{e} \in \hat{\mathcal{E}}^*} \oneDIntegral{\hat\tau_l}{\hat\tau_r}{}{\tau}},
\end{equation}
for which each of the integrals can easily be evaluated analytically.

Applying the interface parametrisation to~\cref{eqn:app:derivative:firstmomentder} similarly results in
\begin{align}
  \frac{d}{d\vartheta} \+M_1(c \cap l(q)) &= \sum_{\hat{e} \in \hat{\mathcal{E}}^*} \oneDIntegral{\hat\tau_l}{\hat\tau_r}{\gradient q \cdot \+v_\vartheta \+p(\tau)}{\tau}\\
  &= \sum_{\hat{e} \in \hat{\mathcal{E}}^*} \oneDIntegral{\hat\tau_l}{\hat\tau_r}{\curlypar{\squarepar{\+\eta + \kappa \tau \+\tau} \cdot \squarepar{\roundpar{\frac{d\phi}{d\vartheta}-\tau} \+\eta + \roundpar{\phi - \frac{\kappa}{2} \tau^2} \+\tau}}\\ &\quad\curlypar{\roundpar{\phi - \frac{\kappa}{2} \tau^2} \+\eta + \tau \+\tau}}{\tau}\\
  &= \+\eta \sum_{\hat{e} \in \hat{\mathcal{E}}^*} \oneDIntegral{\hat\tau_l}{\hat\tau_r}{\curlypar{\roundpar{\frac{d\phi}{d\vartheta} - \tau}\roundpar{\phi - \frac{\kappa}{2} \tau^2} + \kappa\tau \roundpar{\phi - \frac{\kappa}{2} \tau^2}^2}}{\tau} \\ &\quad + \+\tau \sum_{\hat{e} \in \hat{\mathcal{E}}^*} \oneDIntegral{\hat\tau_l}{\hat\tau_r}{\curlypar{\frac{d\phi}{d\vartheta} - \tau + \kappa\tau \roundpar{\phi - \frac{\kappa}{2} \tau^2}}\tau}{\tau},
\end{align}
which again can be evaluated analytically after substituting~\cref{eqn:app:derivative:volume_constraint}.

\begin{example}[Linear interface in a convex control volume]
  Note that convexity of the control volume with $\kappa = 0$ implies that the interface consists of a single segment $\hat{e}$.
  The vanishing of the derivative of the zeroth moment then implies that
  \begin{equation}\label{eqn:app:derivative:mof_volcons}
    0 = \frac{d}{d\vartheta} M_0(c \cap l(q)) = \oneDIntegral{\hat\tau_l}{\hat\tau_r}{\roundpar{\frac{d\phi}{d\vartheta} - \tau}}{\tau},
  \end{equation}
  and therefore
  \begin{equation}
    \frac{d\phi}{d\vartheta} = \frac{\oneDIntegral{\hat\tau_l}{\hat\tau_r}{\tau}{\tau}}{\oneDIntegral{\hat\tau_l}{\hat\tau_r}{}{\tau}},
  \end{equation}
  which corresponds to the tangential co-ordinate of the interface midpoint.
  Similarly, the derivative of the first moment is given by
  \begin{align}
    \frac{d}{d\vartheta} \+M_1(c \cap l(q)) &= \oneDIntegral{\hat\tau_l}{\hat\tau_r}{\roundpar{\frac{d\phi}{d\vartheta} - \tau}\roundpar{\phi \+\eta + \tau \+\tau}}{\tau}\\
    &= \+\eta \phi\oneDIntegral{\hat\tau_l}{\hat\tau_r}{\roundpar{\tau - \frac{d\phi}{d\vartheta}}}{\tau} + \+\tau\oneDIntegral{\hat\tau_l}{\hat\tau_r}{\roundpar{\tau - \frac{d\phi}{d\vartheta}}\tau}{\tau},
  \end{align}
  where we have made use of the fact that the interface is linear and therefore the normal $\+\eta$ as well as the tangent $\+\tau$ are constant w.r.t. $\tau$.
  Furthermore, by making use of~\cref{eqn:app:derivative:mof_volcons} we find that the term multiplied by $\+\eta$ vanishes, and therefore
  \begin{equation}
    \frac{d}{d\vartheta} \+M_1(c \cap l(q)) = \+\tau\oneDIntegral{\hat\tau_l}{\hat\tau_r}{\roundpar{\tau - \frac{d\phi}{d\vartheta}}^2}{\tau} = \frac{1}{3} \squarepar{\roundpar{\hat\tau_r - \frac{d\phi}{d\vartheta}}^3 - \roundpar{\hat\tau_l - \frac{d\phi}{d\vartheta}}^3}\+\tau.
  \end{equation}
  Note that since $\frac{d\phi}{d\vartheta}$ corresponds to the tangential co-ordinate of the interface midpoint (for $\kappa = 0$), we find that $\hat\tau_r - \frac{d\phi}{d\vartheta} = \frac{\interfaceLength}{2} = \frac{d\phi}{d\vartheta} - \hat\tau_l$, and therefore
  \begin{equation}
    \frac{d}{d\vartheta} \+M_1(c \cap l(q)) = \frac{1}{3} \squarepar{\roundpar{\frac{\interfaceLength}{2}}^3 - \roundpar{-\frac{\interfaceLength}{2}}^3}\+\tau = \frac{1}{12} \interfaceLength^3 \+\tau,
  \end{equation}
  which agrees with the result from~\citet{Dyadechko2005} in~\cref{eqn:app:derivative:plic}.
\end{example}

\subsection{Derivatives of the LVIRA cost function}\label{sec:app:derivative_l2}
The linearisation of the LVIRA cost function (given by~\cref{eqn:plic:lvira:costfun}) requires the derivative of the zeroth moment of neighbouring control volumes w.r.t. the normal angle $\vartheta$, in the following way
\begin{equation}
  \frac{d}{d\vartheta} f^2_{L^2}(q) = \sum_{c' \in \mathcal{C}(c)} \frac{2}{M_0(c')} \roundpar{M_0(c' \cap \halfspace{q}) - M^l_{0,c'}} \frac{d}{d\vartheta}M_0(c' \cap \halfspace{q}),
\end{equation}
and similarly for the curvature $\kappa$ (only required for PROST).
As with the discussion in~\cref{sec:app:derivative_mof}, the derivative of the shift $\phi$ w.r.t. the normal angle is defined by conservation of volume.
Hence~\cref{eqn:app:derivative:volume_constraint} still holds
\begin{equation}\label{eqn:app:derivative_l2:volume_constraint_s_varphi}
  \frac{d\phi}{d\vartheta} = -\frac{\sum_{\hat{e} \in \hat{\mathcal{E}}_c^*} \oneDIntegral{\hat\tau_l}{\hat\tau_r}{\squarepar{\roundpar{\phi - \frac{\kappa}{2} \tau^2} \kappa \tau - \tau}}{\tau}}{\sum_{\hat{e} \in \hat{\mathcal{E}}_c^*} \oneDIntegral{\hat\tau_l}{\hat\tau_r}{}{\tau}},
\end{equation}
where we now write $\hat{\mathcal{E}}_c^*$ to emphasise that the edges correspond to the control volume $c$: the volume in the centre control volume $c$ is conserved.
For neighbouring control volumes, $c' \in \mathcal{C}(c)$, we find that the derivative of the zeroth moment of intersection is given by (as follows from~\cref{eqn:app:derivative:zeroth_moment_derivative})
\begin{equation}
  \frac{d}{d\vartheta} M_0(c' \cap l(q)) = \sum_{\hat{e} \in \hat{\mathcal{E}}_{c'}^*} \oneDIntegral{\hat\tau_l}{\hat\tau_r}{\squarepar{\frac{d\phi}{d\vartheta} - \tau + \kappa \tau \roundpar{\phi - \frac{\kappa}{2} \tau^2}}}{\tau},
\end{equation}
which can be evaluated analytically.

A similar approach is followed for the derivative of the zeroth moment w.r.t. the curvature.
Analogous to~\cref{eqn:app:derivative:pos_der_varphi} we note that the derivative (w.r.t. the curvature) of a point on the parabola is given by
\begin{equation}
  \+v_\kappa = \frac{d}{d\kappa} \+p = \roundpar{\frac{d\phi}{d\kappa}-\frac{\tau^2}{2}} \+\eta,
\end{equation}
where $\frac{d\phi}{d\kappa}$ is such that the zeroth moment of intersection does not change when changing the curvature.
To this end we compute the derivative of the zeroth moment of intersection of any control volume $c'$ w.r.t. the curvature $\kappa$ (we follow the same steps as in~\cref{eqn:app:derivative:volumeder,eqn:app:derivative:intermediate_result,eqn:app:derivative:explicit_interface_der})
\begin{equation}\label{eqn:app:derivative_l2:volume_der_kappa}
  \frac{d}{d\kappa} M_0(c' \cap l(q)) = \sum_{\hat{e} \in \hat{\mathcal{E}}_{c'}^*} \oneDIntegral{\hat\tau_l}{\hat\tau_r}{\roundpar{\frac{d\phi}{d\kappa} - \frac{\tau^2}{2}}}{\tau}.
\end{equation}
Conservation of the centred volume (i.e. letting $c' = c$ in~\cref{eqn:app:derivative_l2:volume_der_kappa} and equating the result to zero) yields the following expression for the derivative of the shift $\phi$ w.r.t. the curvature $\kappa$
\begin{equation}\label{eqn:app:derivative_l2:volume_constraint_s_kappa}
  \frac{d\phi}{d\kappa} = \frac{\sum_{\hat{e} \in \hat{\mathcal{E}}_{c}^*} \oneDIntegral{\hat\tau_l}{\hat\tau_r}{{\frac{\tau^2}{2}}}{\tau}}{\sum_{\hat{e} \in \hat{\mathcal{E}}_{c}^*} \oneDIntegral{\hat\tau_l}{\hat\tau_r}{}{\tau}}.
\end{equation}
The resulting expression can then be substituted in~\cref{eqn:app:derivative_l2:volume_der_kappa} for the computation of the derivative w.r.t. the curvature of the zeroth moment of intersection.


\bibliography{library}
\end{document}


\pagenumbering{gobble}
  \begin{table}[]
    \centering
    \input{tikz/int_validation_advection_table}
    \caption{Accuracy of the interface shape for the vortex reverse problem at $t = T$.
      The same error measures as previously displayed in fig. 13 are used, but we have here also included the accuracy in terms of the $L^1$-norm of the volume fraction error.}
      \label{tab:int_validation_advection}
  \end{table}